 \tikzstyle{printersafe}=[decoration={snake,amplitude=0pt}]
 \tikzset{
 	>=stealth,
 	every picture/.style={thick},
 	graphs/every graph/.style={empty nodes},
 }
 \tikzstyle{vertex}=[
 \tikzstyle{printersafe}=[decoration={snake,amplitude=0pt}]
 \DeclareMathOperator{\Ima}{Im}
 \newcommand{\id}{\operatorname{id}}
 \newcommand{\rank}{\operatorname{rank}}
 \newcommand{\pp}{\mathbb{P}}
 \newcommand{\zz}{\mathbb{Z}}
 \newcommand{\Q}{\mathbb{Q}}
 \def\Ox #1.{\mathcal{O}_{#1}}			
 \def\pr #1.{\mathbb P^{#1}}				
 \def\af #1.{\mathbb A^{#1}}			
 \def\ses#1.#2.#3.{0\to #1\to #2\to #3 \to 0}	
 \def\xrar#1.{\xrightarrow{#1}}			
 \def\K#1.{K_{#1}}						
 \def\bA#1.{\mathbf{A}_{#1}}			
 \def\bM#1.{\mathbf{M}_{#1}}				
 \def\bL#1.{\mathbf{L}_{#1}}				
 \def\bB#1.{\mathbf{B}_{#1}}				
 \def\bK#1.{\mathbf{K}_{#1}}			
 \def\subs#1.{_{#1}}					
 \def\sups#1.{^{#1}}
 \DeclareFontFamily{U}{wncy}{}
 \DeclareFontShape{U}{wncy}{m}{n}{<->wncyr10}{}
 \DeclareSymbolFont{mcy}{U}{wncy}{m}{n}
 \DeclareMathSymbol{\Sh}{\mathord}{mcy}{"58}
 \newtheorem{theorem}{Theorem}[section]
 \newtheoremstyle{exampstyle}
 {\topsep} 
 {\topsep} 
 {} 
 {} 
 {\bfseries} 
 {.} 
 {.5em} 
 {} 
 \newtheorem{lemma}[theorem]{Lemma}
 \newtheorem{proposition}[theorem]{Proposition}
 \newtheorem{corollary}[theorem]{Corollary}
 \theoremstyle{definition}
 \newtheorem{notation}[theorem]{Notation}
 \newtheorem{definition}[theorem]{Definition}
 \newtheorem{remark}[theorem]{Remark}
 \theoremstyle{remark}
 \numberwithin{equation}{section}
\title{Computing the Cassels-Tate Pairing for Genus Two Jacobians with Rational Two Torsion Points}
\author{Jiali Yan }
\date{\today}
\begin{document}
\maketitle

\begin{abstract}
	In this paper, we give an explicit formula as well as a practical algorithm for computing the Cassels-Tate pairing on $\text{Sel}^{2}(J) \times \text{Sel}^{2}(J)$ where $J$ is the Jacobian variety of a genus two curve under the assumption that  all points in $J[2]$ are $K$-rational. We also give an explicit formula for the Obstruction map $\text{Ob}: H^1(G_K, J[2]) \rightarrow \text{Br}(K)$ under the same assumption. Finally, we include a worked example demonstrating we can indeed improve the rank bound given by a  2-descent via computing the Cassels-Tate pairing. \\
	\end{abstract}
\section{Introduction}
For any principally polarized  abelian variety $A$ defined over a number field $K$, Cassels and Tate  \cite{cassels1} \cite{cassels2} and \cite{tate} constructed a pairing 
$$\Sh(A) \times \Sh(A) \rightarrow \Q/\zz,$$
that is nondegenerate after quotienting out the maximal divisible subgroup of $\Sh(A)$. This pairing is called the Cassels-Tate pairing and it naturally lifts to a pairing on Selmer groups. One application of this pairing is in improving the bound on the Mordell-Weil rank $r(A)$ obtained by performing a standard descent calculation.  More specifically, if $\Sh(A)$ is finite or if all the $n$-torsion points of $A$ are defined over $K$, the kernel of the Cassels-Tate pairing on $\text{Sel}^n(A) \times \text{Sel}^n(A)$ is equal to  the image of the natural map $\text{Sel}^{n^2}(A) \rightarrow \text{Sel}^n(A)$ induced from the map $A[n^2] \xrightarrow{n} A[n]$, see  \cite[Proposition 1.9.3]{thesis} for details. This shows that carrying out an $n$-descent and computing the Cassels-Tate pairing on $\text{Sel}^n(A) \times \text{Sel}^n(A)$ gives the same rank bound as obtained from $n^2$-descent where $\text{Sel}^{n^2}(A)$ needs to be computed. \\

There have been many results on computing the Cassels-Tate pairing in the case of elliptic curves, such as \cite{cassels98} \cite{steve}  \cite{binary quartic}  \cite{monique} \cite{3 isogeny} \cite{platonic} \cite{3 selmer}. We are interested in the natural problem of  generalizing the different algorithms for computing the Cassels-Tate pairing for elliptic curves to compute the pairing for abelian varieties of higher dimension. \\

In Section 2, we give the preliminary results needed for the later sections, including the homogeneous space definition of the Cassels-Tate pairing. In Section 3, we  
state and prove an explicit formula for the pairing $\langle \;, \; \rangle_{CT}$ on $\text{Sel}^2(J) \times \text{Sel}^2(J)$ where $J$ is the Jacobian variety of a genus two curve under the assumption that  all points in $J[2]$ are $K$-rational. This formula is analogous to that in the elliptic curve case in \cite{cassels98}. In Section 4, we describe a practical algorithm for computing the pairing $\langle \;, \; \rangle_{CT}$  using the formula in Section 3. In section 5, we also give an explicit formula for the Obstruction map $\text{Ob}: H^1(G_K, J[2]) \rightarrow \text{Br}(K)$ under the assumption that all points in $J[2]$ are defined over $K$ generalizing the result in the elliptic curve case \cite [Proposition 3.4]{oneil}, \cite[Theorem 6]{clark}. Finally, in Section 7, we include a worked example demonstrating that  computing the Cassels-Tate pairing can indeed turn a 2-descent to a 4-descent and improve the rank bound given by a 2-descent. The content of this paper is based on  Chapter 4 of the thesis of the author \cite{thesis}.\\

\subsection*{Acknowledgements}
I  express my sincere and deepest gratitute to my PhD supervisor, Dr. Tom Fisher, for his patient guidance and insightful comments  at every stage during my research.\\

\section{Preliminary Results}
\subsection{The set-up}\label{sec:the-set-up}
In this paper, we are working over a number field $K$. For any field $k$, we let $\bar{k}$ denote its algebraic closure and  let $\mu_n \subset \bar{k}$ denote the $n^{th}$ roots of unity in $\bar{k}$.  We let $G_k$ denote the absolute Galois group $\text{Gal}(\bar{k}/k)$.\\

Let  $\mathcal{C}$ be a general \emph{genus two curve} defined over $K$, which  is a smooth projective curve.  It can be given in the following hyperelliptic form:

$$\mathcal{C}: y^2 = f(x)= f_6x^6 +f_5x^5 +f_4x^4 +f_3x^3 +f_2x^2 +f_1x +f_0,$$
where $f_i \in K$, $f_6 \neq 0$ and the discriminant $\triangle(f) \neq 0$, which implies that $f$ has distinct roots in $\bar{K}$.\\

We let $J$ denote the \emph{Jacobian variety} of  $\mathcal{C}$, which is an abelian variety of dimension two defined over $K$ that can be identified with $\text{Pic}^0(\mathcal{C})$. We denote the identity element of $J$ by $\mathcal{O}_J$. Via the natural isomorphism $\text{Pic}^2(\mathcal{C}) \rightarrow \text{Pic}^0(\mathcal{C})$ sending $[P_1+P_2] \mapsto [P_1 + P_2 - \infty^+ - \infty^-]$,  a point $P \in J$ can be identified with an unordered pair of points of $\mathcal{C}$, $\{P_1, P_2\}$. This identification is unique unless $P= \mathcal{O}_J$, in which case it can be represented by any pair of points on $\mathcal{C}$ in the form $\{(x, y), (x, -y)\}$ or $\{\infty^+, \infty^-\}$. Suppose the roots of $f$ are denoted by $\omega_1, ..., \omega_6$. Then $J[2] = \{\mathcal{O}_J, \{(\omega_i, 0), (\omega_j, 0)\}\text{ for } i\neq j\}$. Also, for a point $P \in J$, we let $\tau_P:J \rightarrow J$ denote the translation by $P$ on $J$.\\

As described in \cite[Chapter 3, Section 3]{the book}, suppose $\{P_1, P_2\} $and $\{Q_1, Q_2\}$ represent $P, Q \in J[2]$ where $P_1, P_2, Q_1, Q_2$ are Weierstrass points, then  $$e_2(P, Q)= (-1)^{|\{P_1, P_2\} \cap \{Q_1, Q_2\} |}.$$\\

\subsection{Theta divisor and Kummer surface}\label{sec:theta-divisor-and-kummer-surface}
The \emph{theta divisor}, denoted by $\Theta$, is defined to be the divisor on $J$ that corresponds to the divisor $\{P\}\times \mathcal{C}+ \mathcal{C} \times \{P\}$ on $\mathcal{C} \times \mathcal{C}$ under the birational morphism $\text{Sym}^2\mathcal{C} \rightarrow J$, for some Weierstrass point $P \in \mathcal{C}$. The Jacobian variety $J$ is principally polarized abelian variety via $\lambda: J \rightarrow J^{\vee}$ sending $P$ to $[\tau_P^*\Theta-\Theta].$\\

The \emph{Kummer surface}, denoted by $\mathcal{K}$, is the quotient of $J$ via the involution $[-1]: P \mapsto -P$. The fixed points under the involution are the 16 points of order 2 on $J$ and these map to the 16 nodal singular points of $\mathcal{K}$ (the $\emph{nodes}$). General theory, as in \cite[Theorem 11.1]{abelian varieties}, \cite[page 150]{theta},  shows that the linear system of $n\Theta$ of $J$ has dimension $n^2$. Moreover, $|2\Theta|$ is base point free and $|4\Theta|$ is very ample.\\

\subsection{Explicit embeddings of $J$ and $\mathcal{K}$}\label{sec:explicit-embeddings-of-j-and-mathcalk}
Denote a generic point on the Jacobian $J$ of $\mathcal{C}$ by $\{(x, y), (u, v)\}$. Then, following \cite[Chapter 3, Section 1]{the book}, the morphism from $J$ to $\pp^3$ is given by $$k_1 = 1, k_2 = (x+u), k_3 = xu, k_4 = \beta_0,$$ where
$$\beta_0 = \frac{F_0(x, u)-2yv}{(x-u)^2}$$ with $F_0(x, u)= 2f_0 + f_1(x+u) +2f_2(xu) + f_3(x+u)(xu) + 2f_4(xu)^2 + f_5(x+u)(xu)^2 + 2f_6(xu)^3.$ \\

We denote the above morphism by $J \xrightarrow{|2\Theta|} \mathcal{K} \subset \pp^3$ and it maps $\mathcal{O}_J$ to $(0:0:0:1)$. It is known that its image  in $\mathbb{P}^3$ is precisely the Kummer surface $\mathcal{K}$ and is given by the vanishing of the quartic  $G(k_1, k_2, k_3, k_4)$ with explicit formula given in \cite[Chapter 3, Section 1]{the book}. Therefore, the Kummer surface $\mathcal{K} \subset \pp^3_{k_i}$  is  defined by $G(k_1, k_2, k_3, k_4) =0.$\\

\begin{remark}\label{rem: translation by 2 torsion is linear on K}
	Suppose $P \in J[2]$. We know $\tau_P^*(2\Theta) \sim 2\Theta$ via the polarization. This implies that translation by  $P$ on $J$ induces a linear isomorphism on $\mathcal{K} \subset \pp^3$.\\

\end{remark}

We now look at the  embedding of $J$ in $\pp^{15}$ induced by $|4\Theta|$. Let $k_{ij} = k_ik_j$, for $1 \leq i\leq j \leq 4$. Since $\mathcal{K}$ is irreducible and defined by a polynomial of degree 4, $k_{11}, k_{12}, ..., k_{44}$ are 10 linearly independent even elements in $\mathcal{L}(2\Theta^+ +2 \Theta^-)$. The  six odd basis elements in $\mathcal{L}(2\Theta^+ + 2\Theta^-)$ are given explicitly in   \cite[Section 3]{explicit twist}. A function $g$ on $J$ is \emph{even} when it  is invariant under the involution $[-1]: P \mapsto -P$ and is \emph{odd} when  $g\circ [-1]=-g$. \\

Unless stated otherwise, we will use the basis $k_{11},k_{12}, ..., k_{44},  b_1, ..., b_6$ for $\mathcal{L}(2\Theta^+ + 2\Theta^-)$, to embed $J$ in $\pp^{15}$. The following theorem gives the defining equations of $J$.\\

\begin{theorem}\label{theorem: 72}({\cite[Therorem 1.2]{72 theorem}, \cite[Therorem 1.2]{the gp law paper}}) Let $J$ be the Jacobian variety of the genus two curve $\mathcal{C}$ defined by $y^2=f_6x^6+... +f_1x +  f_0$. The $72$ quadratic forms over $\zz[f_0, . . . , f_6]$ given in  \cite[Appendix A]{72 theorem} are a set of defining equations for the projective variety given by the embedding of $J$ in $\pp^{15}$ induced by the basis of $\mathcal{L}(2\Theta^++2\Theta^-)$ with explicit formulae given in \cite[Definition 1.1]{72 theorem} or  \cite[Definition 1.1]{the gp law paper}. The change of basis between this basis of $\mathcal{L}(2\Theta^++2\Theta^-)$ and $k_{11}, k_{12}, ..., k_{44}, b_1, ..., b_6$ is  given in \cite[Section 3]{explicit twist}.\\
	
\end{theorem}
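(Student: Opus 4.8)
The plan is to reduce the statement to a clean cohomological dimension count together with an explicit verification, relying on the general theory of equations defining abelian varieties. Write $L = \mathcal{O}_J(2\Theta^+ + 2\Theta^-)$; since $\Theta^+$ and $\Theta^-$ are algebraically equivalent to $\Theta$, the bundle $L$ is algebraically equivalent to $4\Theta$, so it is the fourth power of the principal polarization. Because the exponent $4$ is at least $3$, Mumford's theory (Koizumi's projective normality theorem, together with Mumford's result that for an ample symmetric bundle of the form $M^{\otimes n}$ with $n \geq 4$ the homogeneous ideal of the embedded abelian variety is generated by quadrics) guarantees two things: the embedding $J \hookrightarrow \pp^{15}$ is projectively normal, and the homogeneous ideal $I(J)$ is generated by its degree-two part. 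Hence it suffices to show that the $72$ listed quadratic forms constitute a basis of $I(J)_2$.

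First I would pin down the number $72$ by computing $\dim I(J)_2$. The space of all quadrics on $\pp^{15}$ has dimension $\binom{17}{2} = 136$. By projective normality the multiplication map $\operatorname{Sym}^2 H^0(J, L) \to H^0(J, L^{\otimes 2})$ is surjective, and its kernel is exactly $I(J)_2$. As recalled above, for a principally polarized abelian surface one has $h^0(n\Theta) = \chi(n\Theta) = (n\Theta)^2/2! = n^2$, so $h^0(L) = 16$ (consistent with the target $\pp^{15}$) and $h^0(L^{\otimes 2}) = h^0(8\Theta) = 64$. Therefore $\dim I(J)_2 = 136 - 64 = 72$, which is the source of the count in the statement.

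It then remains to check that the $72$ explicit forms of \cite[Appendix A]{72 theorem} (a) vanish identically on $J$ and (b) are linearly independent over $\zz[f_0,\dots,f_6]$; combined with the previous paragraph this forces them to span $I(J)_2$ and hence to generate $I(J)$. For the vanishing I would substitute the coordinate functions in terms of a generic point $\{(x,y),(u,v)\}$ of $J$, using the given formulae for the $k_i$ and the $b_j$, and verify that each form reduces to zero in the function field of $J$ --- equivalently, one derives the forms from the addition law and the theta/biextension relations, which is the route taken in \cite{the gp law paper}. For linear independence I would either inspect distinguished leading monomials of the forms or specialize $(f_0,\dots,f_6)$ to a single explicit curve and check that the resulting $72 \times 136$ coefficient matrix has full rank.

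The genuinely hard part is twofold. Producing the explicit quadrics in the first place is the main computational obstacle, since it requires the full addition formulae on $J$ expressed in the $|4\Theta|$-coordinates; this is where all the real work sits. The second, more structural, difficulty is upgrading the field-theoretic count above to a statement valid over $\zz[f_0,\dots,f_6]$, that is, in the universal family over the parameter space and integrally: one must verify that the $72$ forms have coefficients in $\zz[f_0,\dots,f_6]$, that the subscheme they cut out is flat over the base with the expected Hilbert polynomial, and that no extra degeneration occurs over special points of the base (including in positive characteristic), so that the generated ideal coincides with $I(J)$ fibrewise. I expect this flatness-and-specialization step, rather than the clean dimension count, to be the subtle point, because the cohomological argument is most naturally carried out over a field where $\triangle(f) \neq 0$.
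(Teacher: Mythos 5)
This theorem is not proved in the paper at all: it is imported verbatim from Flynn's papers \cite{72 theorem} and \cite{the gp law paper} (the two citations built into the statement), so there is no internal proof to compare yours against; the only related material in the paper is the recollection in Section~\ref{sec:theta-divisor-and-kummer-surface} that $h^0(n\Theta)=n^2$. Judged on its own terms, your outline is sound, and it reconstructs essentially the strategy behind the original source: general theory for $|4\Theta|$-embeddings plus an explicit computation. Two remarks on the details. First, $2\Theta^+ + 2\Theta^-$ is only \emph{algebraically}, not linearly, equivalent to $4\Theta$; to invoke the Mumford--Kempf quadric-generation theorem you should either note that over $\bar{K}$ one has $\mathcal{O}_J(2\Theta^+ + 2\Theta^-)\cong N^{\otimes 4}$ with $N$ algebraically equivalent to $\Theta$ (using divisibility of $\operatorname{Pic}^0(J_{\bar{K}})$), or work with a version of the theorem stated for the algebraic equivalence class; since generation of the homogeneous ideal may be checked after base change to $\bar{K}$, nothing is lost this way. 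Second, granting projective normality, your count $\binom{17}{2}-h^0(8\Theta)=136-64=72$ does pin down the degree-two graded piece of the ideal, so vanishing plus linear independence of the listed forms is indeed all that remains. What you correctly flag but cannot supply --- producing the $72$ forms in the first place, checking that they vanish, and checking that they stay linearly independent in every residue characteristic different from $2$ (independence over $\Q$ is not preserved under reduction mod $p$ for free) --- is precisely the computational content of Flynn's papers, and it is the bulk of the work. In short: your proposal is correct in outline, follows the same route as the cited source rather than a genuinely different one, and is necessarily incomplete exactly where the theorem is computational rather than conceptual.
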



\subsection{Principal homogeneous space and 2-coverings}\label{sec:principal-homogeneous-space-and-2-coverings}
A \emph{principal homogeneous space} or  \emph{torsor} for $J$ defined over a field $K$ is a variety $V$  together with a morphism $\mu : J \times V \rightarrow V$,  both defined over $K$, that induces a simply transitive action on the $\bar{K}$-points.\\

We say $(V_1, \mu_1)$ and $(V_2, \mu_2)$ are isomorphic over a field extension $K_1$ of $K$ if there is an isomorphism $\phi: V_1 \rightarrow V_2$ defined over $K_1$ that respects the action of $J$.\\

 A \emph{2-covering} of $J$ is a variety $X$ defined over $K$ together with a morphism $\pi : X \rightarrow J$ defined over $K$, such that there exists an isomorphism $\phi : X \rightarrow J$ defined over $\bar{K}$ with $\pi = [2] \circ \phi$. An isomorphism $(X_1, \pi_1) \rightarrow (X_2, \pi_2)$ between two 2-coverings is an isomorphism $h: X_1 \rightarrow X_2$ defined over $K$ with $\pi_1 = \pi_2 \circ h$. We sometimes denote $(X, \pi)$ by $X$ when the context is clear.\\

It can be checked that a 2-covering is a principal homogeneous space. The short exact sequence  $0 \rightarrow J[2] \rightarrow J \xrightarrow{2} J \rightarrow 0$ induces the connecting map in the long exact sequence
\begin{equation}\label{eqn: connecting map}
\delta: J(K) \rightarrow H^1(G_K,J[2]).
\end{equation}
The following two   propositions are  proved in \cite{explicit twist}.\\

\begin{proposition} \label{proposition_2_covering} \cite[Lemma 2.14]{explicit twist}
	Let $(X, \pi)$ be a $2$-covering of an abelian variety $J$ defined over $K$ and choose an isomorphism $\phi:X \rightarrow J$ such that $\pi = [2] \circ \phi$. Then for each  $\sigma \in G_K$, there is a unique point $P \in J[2](\bar{K})$ satisfying $\phi \circ \sigma(\phi^{-1}) = \tau_P$. The map $\sigma \mapsto P$ is a cocycle whose class in $H^1(G_K, J[2])$ does not depend on the choice of $\phi$. This yields a bijection between the set of isomorphism classes of $2$-coverings of $J$ and the set $H^1(G_K,J[2])$.\\
\end{proposition}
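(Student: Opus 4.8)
The plan is to establish the statement in four stages: existence and uniqueness of the point $P=P_\sigma$; the cocycle relation; independence of the resulting cohomology class from the choice of $\phi$ (hence well-definedness on isomorphism classes); and finally the bijection. Throughout, write $\psi_\sigma := \phi \circ \sigma(\phi^{-1}) : J \to J$, so the claim is that each $\psi_\sigma$ is translation by a unique $P_\sigma \in J[2](\bar K)$. First, for existence and uniqueness I would observe that, since $\pi=[2]\circ\phi$ and $[2]$ are both defined over $K$, applying $\sigma$ gives $\pi = [2]\circ\sigma(\phi)$; combining this with $\pi=[2]\circ\phi$ yields $[2]\circ\psi_\sigma = \pi\circ\sigma(\phi^{-1}) = [2]\circ\sigma(\phi)\circ\sigma(\phi^{-1}) = [2]$. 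Hence the morphism $Q \mapsto \psi_\sigma(Q)-Q$ lands in $J[2]$. As $J$ is connected (irreducible) and $J[2](\bar K)$ is finite, its image is a single point, so $\psi_\sigma=\tau_{P_\sigma}$ for a unique $P_\sigma\in J[2](\bar K)$. This constancy argument — a morphism from the connected variety $J$ to the discrete set $J[2]$ is constant — is the technical workhorse and is reused below.

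Second, for the cocycle relation I would factor, using that Galois action commutes with composition,
\[
\psi_{\sigma\tau} = \phi \circ \sigma\tau(\phi^{-1}) = \bigl(\phi \circ \sigma(\phi^{-1})\bigr) \circ \sigma\bigl(\phi \circ \tau(\phi^{-1})\bigr) = \psi_\sigma \circ \sigma(\psi_\tau),
\]
the middle equality holding because $\sigma(\phi^{-1})\circ\sigma(\phi)$ is the identity on $J$. Since $\psi_\sigma=\tau_{P_\sigma}$, $\psi_\tau=\tau_{P_\tau}$, conjugation gives $\sigma(\tau_{P_\tau})=\tau_{\sigma(P_\tau)}$, and translations commute, this reads $\tau_{P_{\sigma\tau}}=\tau_{P_\sigma+\sigma(P_\tau)}$, i.e. $P_{\sigma\tau}=P_\sigma+\sigma(P_\tau)$, which is exactly the $1$-cocycle condition for $J[2]$.

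Third, for independence of the choice: if $\phi'$ is another isomorphism with $[2]\circ\phi'=\pi$, then $[2]\circ(\phi'\circ\phi^{-1})=[2]$, so by the constancy argument $\phi'=\tau_R\circ\phi$ for some $R\in J[2](\bar K)$. A short computation (again using commutativity of translations) gives $\psi'_\sigma=\tau_R\circ\tau_{P_\sigma}\circ\tau_{-\sigma(R)}=\tau_{P_\sigma+R-\sigma(R)}$, so the new cocycle differs from the old by the coboundary $\sigma\mapsto\sigma(R)-R$; the class in $H^1(G_K,J[2])$ is therefore unchanged. The same bookkeeping shows that an isomorphism $h:X_1\to X_2$ of $2$-coverings over $K$ (so $h^\sigma=h$) lets one take $\phi_1=\phi_2\circ h$, producing identical cocycles; hence the class depends only on the isomorphism class of $(X,\pi)$, and the map to $H^1(G_K,J[2])$ is well defined.

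Finally, for the bijection, injectivity reverses the previous step: if two coverings give the same class, adjust the $\phi_i$ by suitable translations so their cocycles literally coincide, and then $h:=\phi_2^{-1}\circ\phi_1$ satisfies $h^\sigma=h$ and $\pi_2\circ h=\pi_1$, i.e. it is an isomorphism of $2$-coverings. Surjectivity is where I expect the real work: given a cocycle $\sigma\mapsto P_\sigma$, I would regard $\sigma\mapsto\tau_{P_\sigma}$ as a continuous $1$-cocycle valued in the $\bar K$-automorphisms of $J$ and invoke Galois (Weil) descent to produce a variety $X/K$ together with a $\bar K$-isomorphism $\phi:X\to J$ with $\phi\circ\sigma(\phi^{-1})=\tau_{P_\sigma}$; setting $\pi:=[2]\circ\phi$, one checks $\pi^\sigma=[2]\circ\tau_{P_\sigma}^{-1}\circ\phi=[2]\circ\phi=\pi$ using $2P_\sigma=\mathcal{O}_J$, so $\pi$ is defined over $K$ and $(X,\pi)$ realizes the given class. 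The hard part is thus the descent step, which rests on the effectivity of Galois descent for the quasi-projective variety $J$; every other ingredient reduces to the connectedness/constancy argument and the commutativity of translations.
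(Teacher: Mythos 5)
Your proof is correct, and all four stages hold up under scrutiny. One structural point first: the paper itself gives no proof of this proposition --- it is stated as a quotation of \cite[Lemma 2.14]{explicit twist}, introduced by ``The following two propositions are proved in \cite{explicit twist}'' --- so there is no internal argument to compare against; what you have written is the standard self-contained proof that the cited reference supplies. Your connectedness/constancy argument (a morphism from the connected variety $J$ to the finite discrete set $J[2]$ has constant image) is the right technical workhorse, and you correctly reuse it both for existence/uniqueness of $P_\sigma$ and to show that any two choices of $\phi$ differ by a translation $\tau_R$ with $R \in J[2]$; the factorization $\psi_{\sigma\tau} = \psi_\sigma \circ \sigma(\psi_\tau)$ gives the cocycle condition cleanly; and the coboundary bookkeeping in both directions (well-definedness on isomorphism classes, and injectivity by adjusting $\phi_2$ so the cocycles literally agree and checking $h = \phi_2^{-1}\circ\phi_1$ is Galois-invariant) is exactly right, using that $R = -R$ for $R \in J[2]$ so that $R - \sigma(R)$ is indeed a coboundary. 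You are also right to single out surjectivity as the only step needing genuine input: the cocycle $\sigma \mapsto \tau_{P_\sigma}$ is continuous with values in a finite set of automorphisms of $J_{\bar{K}}$, hence factors through $\mathrm{Gal}(L/K)$ for some finite Galois extension $L/K$, and effectivity of Galois (Weil) descent --- which applies since $J$ is projective --- produces the twist $(X, \phi)$; your verification that $\pi = [2]\circ\phi$ is $G_K$-equivariant, via $[2]\circ\tau_{-P_\sigma} = [2]$ because $2P_\sigma = \mathcal{O}_J$, then closes the argument.
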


\begin{proposition}\label{prop:2 covering has a point}\cite[Proposition 2.15]{explicit twist} Let $X$ be a $2$-covering of $J$ corresponding to the cocycle class $\epsilon \in H^1(G_K, J[2])$. Then $X$ contains a $K$-rational point (equivalently $X$ is a trivial principle homogeneous space) if and only if $\epsilon$ is in the image of the connecting map $\delta$ in \eqref{eqn: connecting map}. \\

\end{proposition}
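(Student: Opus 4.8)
The plan is to prove both directions of the biconditional by exploiting the cocycle description of $2$-coverings from Proposition \ref{proposition_2_covering} together with Corollary-type facts about trivial torsors (a torsor is trivial iff it has a $K$-rational point). First I would establish the parenthetical equivalence: a $2$-covering $X$ has a $K$-rational point if and only if it is a trivial principal homogeneous space. Indeed, if $X \cong J$ over $K$ then the image of $\mathcal{O}_J$ gives a $K$-rational point; conversely, given $P \in X(K)$, the map $Q \mapsto \mu(Q, P)$ furnishes an isomorphism $J \to X$ defined over $K(P) = K$, so $X$ is trivial as a torsor for $J$.

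For the forward direction, I would suppose $X$ has a $K$-rational point $P_0$ and let $\phi : X \to J$ be the chosen isomorphism with $\pi = [2]\circ\phi$. The key observation is that $\pi(P_0) \in J(K)$ since $\pi$ is defined over $K$ and $P_0$ is $K$-rational. I claim that $\delta(\pi(P_0))$ is precisely the class $\epsilon$. To see this, note that $\phi(P_0) \in J(\bar K)$ satisfies $[2]\phi(P_0) = \pi(P_0)$, so $\phi(P_0)$ is a $\bar K$-point whose double is the $K$-rational point $\pi(P_0)$; by the explicit description of the connecting map $\delta$ in \eqref{eqn: connecting map}, the class $\delta(\pi(P_0))$ is represented by the cocycle $\sigma \mapsto \sigma(\phi(P_0)) - \phi(P_0) \in J[2]$. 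On the other hand, the cocycle defining $\epsilon$ is $\sigma \mapsto P_\sigma$ where $\phi\circ\sigma(\phi^{-1}) = \tau_{P_\sigma}$. Evaluating the latter at the $K$-rational point $P_0$ (so that $\sigma(P_0) = P_0$) yields $\phi(P_0) = \sigma(\phi^{-1})(P_0) + P_\sigma$, hence $P_\sigma = \phi(P_0) - \sigma(\phi)(\phi^{-1})(P_0)$; since $\sigma(\phi^{-1})(P_0) = \sigma(\phi^{-1}(\sigma^{-1}P_0)) = \sigma(\phi^{-1}(P_0))$ and $\sigma(\phi^{-1}(P_0)) = (\sigma\phi^{-1})(P_0)$, a short manipulation identifies $P_\sigma = \phi(P_0) - \sigma(\phi(P_0))$ up to sign, matching the $\delta$-cocycle. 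Thus $\epsilon = \delta(\pi(P_0))$ lies in the image of $\delta$.

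For the reverse direction, I would start with $\epsilon = \delta(R)$ for some $R \in J(K)$ and construct a $K$-rational point on the corresponding $2$-covering. The standard approach is to pick $S \in J(\bar K)$ with $[2]S = R$; then $\delta(R)$ is represented by $\sigma \mapsto \sigma(S) - S$. Using the bijection of Proposition \ref{proposition_2_covering}, I would verify that the $2$-covering $(J, \tau_{-S}^{-1}\text{-twisted } [2])$ — concretely, the twist of $[2]: J \to J$ by the point $S$, with covering map $\pi(Q) = [2](Q) + R$ or an equivalent normalization — has the cocycle $\sigma \mapsto \sigma(S)-S$ and visibly contains the $K$-rational point corresponding to the identity. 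More cleanly, since $R \in J(K)$, translation gives a $K$-rational structure: the point $\mathcal{O}_J$ on the twisted covering descends to $K$ precisely because $R$ is defined over $K$, so the covering has a rational point and hence is trivial.

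The main obstacle I anticipate is the careful bookkeeping of the Galois action in identifying the two cocycles in the forward direction — in particular keeping track of whether the connecting map $\delta$ produces $\sigma(S) - S$ or $S - \sigma(S)$ and ensuring the sign conventions in $\phi\circ\sigma(\phi^{-1}) = \tau_{P_\sigma}$ are consistent with the definition of $\delta$. Since $J[2]$ is $2$-torsion, the sign ambiguity is actually harmless here, which simplifies matters considerably; nevertheless I would state the cocycle computations precisely to avoid confusion. The construction in the reverse direction is essentially the inverse of the bijection in Proposition \ref{proposition_2_covering}, so the real content is verifying that the distinguished point one writes down is genuinely $G_K$-invariant, which follows from $R \in J(K)$.
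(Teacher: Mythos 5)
You should first note that the paper offers no proof of this statement: it is one of the two results quoted from \cite[Proposition 2.15]{explicit twist}, the text saying only that ``the following two propositions are proved in'' that reference. So there is no in-paper argument to compare against, and your proposal must be judged on its own merits; judged so, it is correct, and it is the standard argument. Both halves are sound. In the forward direction, $\pi(P_0)\in J(K)$, and taking $\phi(P_0)$ as the chosen half of $\pi(P_0)$ identifies the connecting-map cocycle $\sigma\mapsto \sigma(\phi(P_0))-\phi(P_0)$ with the covering cocycle $\sigma\mapsto P_\sigma=\phi(P_0)-\sigma(\phi(P_0))$ up to a sign, which is harmless in the $2$-torsion module $J[2]$, exactly as you say. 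In the reverse direction, the model $\bigl(J,\;Q\mapsto 2Q+R\bigr)$ with $\phi'=\tau_S$, $2S=R$, is a $2$-covering defined over $K$ (because $R\in J(K)$) whose class is $\delta(R)=\epsilon$ and which visibly contains the $K$-point $\mathcal{O}_J$; the bijection of Proposition \ref{proposition_2_covering} then gives a $K$-isomorphism of $2$-coverings between $X$ and this model, which transports $\mathcal{O}_J$ to a $K$-point of $X$.

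Two small points of care, neither a genuine gap. First, in the forward direction some of your intermediate expressions are not well formed: $\phi\circ\sigma(\phi^{-1})=\tau_{P_\sigma}$ is an identity of maps $J\to J$, so it cannot literally be ``evaluated at $P_0$'' (which lies on $X$), and $\sigma(\phi^{-1})(P_0)$ mixes up domains in the same way. The clean version is to rewrite the relation as $\phi=\tau_{P_\sigma}\circ\phi^{\sigma}$ and evaluate at $P_0\in X(K)$: since $\phi^{\sigma}(P_0)=\sigma\bigl(\phi(\sigma^{-1}P_0)\bigr)=\sigma(\phi(P_0))$, one gets $\phi(P_0)=\sigma(\phi(P_0))+P_\sigma$ directly, which is the identity you wanted. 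Second, in the reverse direction your concluding sentence establishes that the constructed model has a rational point and ``hence is trivial''; the statement to be proved, however, concerns $X$ itself, so the final step should say explicitly that the $K$-isomorphism furnished by the injectivity half of Proposition \ref{proposition_2_covering} carries $\mathcal{O}_J$ to a point of $X(K)$. Your appeal to the bijection clearly intends this, but it is the one step where the claim about $X$ is actually obtained.
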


We also state and prove the following proposition which is useful for the computation of the Cassels-Tate pairing later in Sections \ref{sec:explicit-computation-of-d} and \ref{sec:explicit-computation-of-dp-dq-dr-ds}. A \emph{Brauer-Severi} variety is  a variety that is isomorphic to a projective space over $\bar{K}$.\\

\begin{proposition}\label{prop:BS diagram}
	Let $(X, \pi)$ be a $2$-covering of J, with $\phi \circ [2]= \pi$. Then the linear system $|\phi^*(2\Theta)|$ determines a map $X \rightarrow S$ defined over $K$, where $S$ is a Brauer-Severi variety. Also, there exists an isomorphism $\psi$ defined over $\bar{K}$ making the following diagram commute: 
	\begin{equation}\label{diagram: BS def}
	\begin{tikzcd}
	X  \arrow[r, "|\phi^*(2\Theta)|"] \arrow[d, "\phi"]&S \arrow[d, "\psi"]\\
	J \arrow[r, "|2\Theta|"]& \pp^{3}.
	\end{tikzcd}
	\end{equation}
	
	In particular,  if $(X, \pi)$ corresponds to a Selmer element via the correspondence in Proposition~ \ref{proposition_2_covering}, then the Brauer-Severi variety $S$ is isomorphic to $\pp^{3}$. \\
	
\end{proposition}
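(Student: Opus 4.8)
The plan is to pull back the Kummer map along $\phi$ and descend the resulting $\bar K$-morphism, the point being that although $\phi$ itself is only defined over $\bar K$, the linear system $|\phi^*(2\Theta)|$ is defined over $K$. First I would fix an isomorphism $\phi:X\to J$ over $\bar K$ with $\pi=[2]\circ\phi$ and recall from Proposition~\ref{proposition_2_covering} that for each $\sigma\in G_K$ there is $P_\sigma\in J[2](\bar K)$ with $\phi\circ\sigma(\phi^{-1})=\tau_{P_\sigma}$, equivalently $\sigma(\phi)=\tau_{P_\sigma}^{-1}\circ\phi=\tau_{P_\sigma}\circ\phi$ (as $P_\sigma$ is $2$-torsion). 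The class $[2\Theta]\in\Pic(J)$ is defined over $K$ because the Kummer map $|2\Theta|:J\to\mathcal K\subset\pp^3$ has coordinates $k_1,\dots,k_4$ with coefficients in $K$. Hence, writing $L:=\phi^*[2\Theta]\in\Pic(X_{\bar K})$,
\[
\sigma(L)=\sigma(\phi)^*\,\sigma([2\Theta])=(\tau_{P_\sigma}\circ\phi)^*[2\Theta]=\phi^*\bigl(\tau_{P_\sigma}^*[2\Theta]\bigr)=\phi^*[2\Theta]=L,
\]
the penultimate equality being Remark~\ref{rem: translation by 2 torsion is linear on K} ($\tau_{P_\sigma}^*(2\Theta)\sim 2\Theta$). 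Since $\phi$ is an isomorphism and $|2\Theta|$ is base point free with $h^0=4$, the complete system $|L|=|\phi^*(2\Theta)|$ is base point free of projective dimension $3$ and, by the above, $G_K$-stable.

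Next I would descend the target. Consider the $\bar K$-morphism $f:=|2\Theta|\circ\phi:X_{\bar K}\to\pp^3_{\bar K}$; pulling back the basis $k_1,\dots,k_4$ shows $f$ is exactly the morphism attached to $|\phi^*(2\Theta)|$. By Remark~\ref{rem: translation by 2 torsion is linear on K}, translation by $P_\sigma$ induces a linear automorphism $M_\sigma\in\mathrm{PGL}_4(\bar K)$ of $\mathcal K\subset\pp^3$, i.e. $|2\Theta|\circ\tau_{P_\sigma}=M_\sigma\circ|2\Theta|$; combined with the previous step and the $K$-rationality of $|2\Theta|$ this yields $\sigma(f)=M_\sigma\circ f$. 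A direct check, using that $\sigma\mapsto P_\sigma$ is a cocycle and $P\mapsto M_P$ a homomorphism, shows $\sigma\mapsto M_\sigma$ lies in $Z^1\bigl(G_K,\mathrm{PGL}_4(\bar K)\bigr)$. Twisting $\pp^3$ by this cocycle produces a Brauer--Severi variety $S$ over $K$, whose class in $\operatorname{Br}(K)$ is precisely the obstruction to $L$ descending to a line bundle on $X$ over $K$; the relation $\sigma(f)=M_\sigma\circ f$ says exactly that $f$ is $G_K$-equivariant for the twisted $K$-structure, so $f$ descends to a $K$-morphism $X\to S$, and this is the map determined by $|\phi^*(2\Theta)|$. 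Taking $\psi:S\to\pp^3$ to be the $\bar K$-trivialization of the twist gives $\psi\circ(X\to S)=|2\Theta|\circ\phi$, so the diagram \eqref{diagram: BS def} commutes.

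For the final assertion, suppose $(X,\pi)$ corresponds to a class in $\operatorname{Sel}^2(J)$. By the definition of the Selmer group together with the local analogue of Proposition~\ref{prop:2 covering has a point}, $X$ has a $K_v$-point for every place $v$. Applying the $K$-morphism $X\to S$ sends it to a $K_v$-point of $S$, so $S$ is everywhere locally soluble. Since a Brauer--Severi variety acquires a $K$-rational point as soon as it has points over every completion (its class in $\operatorname{Br}(K)$ is locally trivial, hence trivial by the injectivity of $\operatorname{Br}(K)\to\bigoplus_v\operatorname{Br}(K_v)$), it follows that $S\cong\pp^3$ over $K$.

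The formal parts — the Galois-stability computation and the local-global conclusion — are routine. The technical heart is the descent: verifying the cocycle relation for $\sigma\mapsto M_\sigma$ and invoking effective Galois descent to realize the target as a genuine Brauer--Severi variety over $K$ carrying a $K$-rational morphism from $X$. What makes this possible is that the ambiguity in $\phi$ is exactly by $2$-torsion translations, which act \emph{linearly} on $\mathcal K\subset\pp^3$ (Remark~\ref{rem: translation by 2 torsion is linear on K}); this is precisely what keeps the twisting cocycle inside $\mathrm{PGL}_4$ rather than in the larger group of birational automorphisms, so that $S$ is a twist of $\pp^3$ and not merely of the Kummer surface.
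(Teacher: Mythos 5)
Your proposal is correct and follows essentially the same route as the paper: Galois-stability of $|\phi^*(2\Theta)|$ via the cocycle relation $\phi\circ(\phi^{-1})^{\sigma}=\tau_{P_\sigma}$ and the invariance $\tau_{P_\sigma}^*(2\Theta)\sim 2\Theta$ coming from the principal polarization, followed by the Hasse principle for Brauer--Severi varieties in the Selmer case. The only difference is that you spell out the descent explicitly (the $\mathrm{PGL}_4$-cocycle $\sigma\mapsto M_\sigma$ and the twist of $\pp^3$), which the paper leaves implicit when it asserts that the morphism induced by the Galois-stable linear system is defined over $K$.
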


\begin{proof}
	Since $(X, \pi)$ is a $2$-covering of $J$, by Proposition \ref{proposition_2_covering}, we have that for each $\sigma \in G_K$, $\phi \circ (\phi^{-1})^{\sigma} = \tau_{P}$ for some $P \in J[2]$. The principal polarization gives $\tau_P^*(2\Theta) \sim 2\Theta$ which implies that $\phi^*(2\Theta) \sim (\phi^{\sigma})^*(2\Theta)$, hence the morphism induced by $|\phi^*(2\Theta)|$ is defined over $K$. \\
	
	Now if $(X, \pi)$ corresponds to a Selmer element, then $X$ everywhere locally has a point by Proposition \ref{prop:2 covering has a point}, and hence $S$ everywhere locally has a point. Since the Hasse principle holds for Brauer-Severi varieties by \cite[Corollary 2.6]{bs hasse}, we know that $S$ has a point over $K$ and hence it is isomorphic to $\pp^{3}$ by \cite[Therem 5.1.3]{QA}. \\
	
\end{proof}

We now make some observations and give some notation.\\

\begin{remark}\label{rem: diagram of twited kummer}
	Let $\epsilon \in \text{Sel}^2(J)$, and let $(J_{\epsilon}, \pi_{\epsilon})$ denote the 2-covering corresponding to $\epsilon$. There exists an isomorphism $\phi_{\epsilon}$ defined over $\bar{K}$ such that $[2] \circ \phi_{\epsilon}=\pi_{\epsilon}$. Then, by Proposition \ref{prop:BS diagram}, we have the following commutative diagram:
	
	\begin{equation}\label{diagram: BS diagram}
	\begin{tikzcd}
	J_{\epsilon} \arrow[r, "|\phi_{\epsilon}^*(2\Theta)|"] \arrow[d, "\phi_{\epsilon}"]&\mathcal{K}_{\epsilon} \subset \pp^3 \arrow[d, "\psi_{\epsilon}"]\\
	J \arrow[r, "|2\Theta|"]& \mathcal{K} \subset \pp^3.
	\end{tikzcd}
	\end{equation}
	The image of $J_{\epsilon}$ under the morphism induced by $|\phi_{\epsilon}^*(2\Theta)|$  is a surface, denoted by $\mathcal{K}_{\epsilon}$, which we call the \emph{twisted Kummer surface} corresponding to $\epsilon$. Also $\psi_{\epsilon}$ is a linear isomorphism $\pp^3 \rightarrow \pp^3$ defined over $\bar{K}$.  \\
	
\end{remark}

\begin{notation}\label{notation: involution}
	Suppose $(J_{\epsilon}, \pi_{\epsilon})$ is the 2-covering of $J$ corresponding to $\epsilon \in H^1(G_K, J[2])$. The involution $[-1]: P \mapsto -P$ on $J$ induces an  involution $\iota_{\epsilon}$ on $J_{\epsilon}$ such that $\phi_{\epsilon} \circ \iota_{\epsilon}=[-1] \circ \phi_{\epsilon}$, where $[2] \circ \phi_{\epsilon}=\pi_{\epsilon}$.  Moreover, the degree 2 morphism $J_{\epsilon} \xrightarrow{|\phi_{\epsilon}^*(2\Theta)|} \mathcal{K}_{\epsilon} \subset \pp^3$ in \eqref{diagram: BS diagram} is precisely the quotient by $\iota_{\epsilon}$ and so an alternative definition of $\mathcal{K}_{\epsilon}$ is as the quotient of $J_{\epsilon}$ by $\iota_{\epsilon}$. We  call a function $g$ on $J_{\epsilon}$ even if it is invariant under $\iota_{\epsilon}$ and odd if $g \circ \iota_{\epsilon}=-g$.\\
	
\end{notation} 

\subsection{Definition of the Cassels-Tate Pairing}\label{sec:definition-of-the-cassels-tate-pairing}

There are four equivalent definitions of the Cassels-Tate pairing stated and proved in \cite{poonen stoll}. In this paper we will only be using the homogeneous space definition of the Cassels-Tate pairing. Suppose $a ,  a' \in \Sh(J)$. Via the polarization $\lambda$, we get $a' \mapsto b$ where $b \in \Sh(J^{\vee}).$ Let $X$ be the (locally trivial) principal homogeneous space defined over $K$ representing $a$. Then $\text{Pic}^0(X_{\bar{K}})$ is canonically isomorphic as a $G_K$-module to $\text{Pic}^0(J_{\bar{K}}) = J^{\vee}(\bar{K}).$ Therefore,  $b \in \Sh(J^{\vee}) \subset H^1(G_K, J^{\vee})$ represents an element in $H^1(G_K, \text{Pic}^0(X_{\bar{K}}))$. \\

Now consider the exact sequence:
$$0 \rightarrow \bar{K}(X)^*/\bar{K}^* \rightarrow \text{Div}^0(X_{\bar{K}}) \rightarrow \text{Pic}^0(X_{\bar{K}}) \rightarrow 0.$$
We can then map $b$ to an element $b' \in H^2(G_K, \bar{K}(X)^*/\bar{K}^*)$ using the long exact sequence associated to the short exact sequence above. Since $H^3(G_K, \bar{K}^*) = 0$, $b'$  has a lift $f' \in H^2(G_K, \bar{K}(X)^*) $ via the long exact sequence induced by the short exact sequence $0 \rightarrow \bar{K}^* \rightarrow \bar{K}(X)^* \rightarrow \bar{K}(X)^*/\bar{K}^* \rightarrow 0:$
\begin{equation}\label{eqn:def}
H^2(G_K, \bar{K}^*) \rightarrow H^2(G_K, \bar{K}(X)^*)\rightarrow H^2(G_K, \bar{K}(X)^*/\bar{K}^*) \rightarrow H^3(G_K, \bar{K}^*)=0.
\end{equation} 
Next we show that $f'_v \in H^2(G_{K_v}, \bar{K_v}(X)^*)$ is the image of an element $c_v \in H^2(G_{K_v}, \bar{K_v}^*).$ This is because $b \in \Sh(J^{\vee})$ is locally trivial which implies its image $b'$ is locally trivial. Then the statement is true by the exactness of local version of sequence \eqref{eqn:def}. \\

We then can define 
$$\langle a, b\rangle = \sum_v \text{inv}_v(c_v) \in \Q/\zz.$$
The Cassels-Tate pairing $\Sh(J) \times \Sh(J) \rightarrow \Q/\zz$ is defined by
$$\langle a, a' \rangle_{CT} := \langle a, \lambda(a')\rangle.$$

We sometimes refer to $\text{inv}_v(c_v)$ above as the local Cassels-Tate pairing between $a, a' \in \Sh(J)$ for a place $v$ of $K$. Note that the local Cassels-Tate pairing depends on the choice of $f'\in H^2(G_K, \bar{K}(X)^*)$. We make the following remarks that are useful for the computation for the Cassels-Tate pairing.\\

\begin{remark}\label{rem:CT_Selmer}
	$\;$\\
	\begin{enumerate}[label=(\roman*)]

		\item  By \cite{poonen stoll}, we know the homogeneous space definition of the Cassels-Tate pairing is independent of all the choices we make.\\

		\item  Via the map $\text{Sel}^2(J) \rightarrow \Sh(J)[2]$, the definition of the Cassels-Tate pairing on $\Sh(J)[2] \times \Sh(J)[2]$ naturally lifts to a pairing on $\text{Sel}^2(J) \times \text{Sel}^2(J)$. In fact, from now on, we will only be considering $\langle \epsilon, \eta \rangle_{CT}$ for $\epsilon, \eta \in \text{Sel}^2(J)$. The principal homogeneous space $X$ in the definition is always taken to be the $2$-covering of $J$ corresponding to $\epsilon$. One can compute $c_v$ by evaluating $f_v'$ at a point in $X(K_v)$ provided that one avoids the zeros and poles of $f_v'$. Note that $X(K_v) \neq \emptyset$  by Proposition \ref{prop:2 covering has a point}.\\
		
	\end{enumerate}
\end{remark}

\subsection{Explicit 2-coverings of $J$}\label{sec:explicit-2-coverings-of-j}
Let $\Omega$ represent the set of 6 roots of $f$, denoted by $\omega_1, ..., \omega_6$. Recall,  as in Proposition \ref{proposition_2_covering}, the isomorphism classes of 2-coverings of $J$ are parameterized by $H^1(G_K, J[2])$. For the explicit computation of the Cassels-Tate pairing, we need the following result on the explicit  2-coverings of $J$ corresponding to elements in $\text{Sel}^2(J)$. We note that this theorem in fact works over any field of characteristic different from 2.\\

\begin{theorem}\cite[Proposition 7.2, Theorem 7.4, Appendix B]{explicit twist}\label{theorem: explicit twist of J} Let $J$ be  the Jacobian variety of a genus two curve defined by $y^2=f(x)$ where $f$ is a degree 6 polynomial and $\epsilon \in \mathrm{Sel}^2(J)$. Embed $J$  in $\pp^{15}$ via the coordinates $k_{11}, k_{12}, ..., k_{44}, b_1, ..., b_6$. There exists $J_{\epsilon} \subset \pp^{15}$ defined over $K$ with Galois invariant coordinates $u_0, ..., u_9, v_1, ..., v_6$ and a linear isomorphism $\phi_{\epsilon}: J_{\epsilon} \rightarrow J$  such that  $(J_{\epsilon}, [2] \circ \phi_{\epsilon})$ is  a $2$-covering of $J$ whose isomorphism class corresponds to the cocycle class $\epsilon$. Moreover,  $\phi_{\epsilon}$ can be explicitly represented by the $16\times 16$ matrix $R=
	\begin{bmatrix}
	R_1&0\\
	0&R_2\\
	\end{bmatrix}$ for some $10 \times 10$ matrix $R_1$ and some $6 \times 6$ matrix $R_2$.\\

\end{theorem}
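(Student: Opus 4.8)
The plan is to realise $J_\epsilon$ as an explicit twist of $J \subset \pp^{15}$ governed by the cocycle representing $\epsilon$, and to read off both the linearity and the block shape of $\phi_\epsilon$ from the way translation by $2$-torsion interacts with the $|4\Theta|$ embedding. First I would invoke Proposition~\ref{proposition_2_covering}: choosing an isomorphism $\phi_\epsilon : J_\epsilon \to J$ over $\bar K$ with $\pi_\epsilon = [2]\circ\phi_\epsilon$, we obtain $\phi_\epsilon \circ \sigma(\phi_\epsilon^{-1}) = \tau_{P_\sigma}$ for a cocycle $(\sigma \mapsto P_\sigma)$ valued in $J[2]$ whose class is $\epsilon$. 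The key geometric input is that translation by a point of $J[2]$ preserves the very ample system $|4\Theta|$ defining the $\pp^{15}$ embedding: since $\tau_{P_\sigma}^*(2\Theta)\sim 2\Theta$ via the polarization (as in Remark~\ref{rem: translation by 2 torsion is linear on K}) we also get $\tau_{P_\sigma}^*(4\Theta)\sim 4\Theta$, so $\tau_{P_\sigma}$ acts on the basis $k_{11},\dots,k_{44},b_1,\dots,b_6$ of $\mathcal{L}(2\Theta^+ + 2\Theta^-)$ by a linear map $M_\sigma$, i.e.\ by a linear automorphism of the ambient $\pp^{15}$, extending Remark~\ref{rem: translation by 2 torsion is linear on K} from $\pp^3$ to $\pp^{15}$.

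Next I would carry out Galois descent. After a choice of scalar lifts to $\mathrm{GL}_{16}$, the assignment $\sigma \mapsto M_\sigma$ twists the standard $K$-rational structure on $\pp^{15}$; by descent this produces a variety $J_\epsilon$ defined over $K$, sitting inside a Brauer--Severi variety $S$, together with Galois-invariant coordinates $u_0,\dots,u_9,v_1,\dots,v_6$ and the isomorphism $\phi_\epsilon$ realised as the base-change matrix $R \in \mathrm{GL}_{16}(\bar K)$ between the new and old coordinates. For $\epsilon \in \mathrm{Sel}^2(J)$ the twist is everywhere locally solvable, so by the Hasse principle for Brauer--Severi varieties (exactly as in the proof of Proposition~\ref{prop:BS diagram}) we get $S \cong \pp^{15}$; thus $J_\epsilon$ genuinely embeds in $\pp^{15}$ over $K$ and $\phi_\epsilon$ is a linear isomorphism.

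The block-diagonal shape then follows from a symmetry observation. For $P \in J[2]$ one has $-P = P$, hence $[-1]\circ \tau_P = \tau_P \circ [-1]$, so each $M_\sigma$ commutes with the involution $[-1]^*$ and therefore preserves its eigenspace decomposition of $\mathcal{L}(2\Theta^+ + 2\Theta^-)$ into the $10$-dimensional even part $\langle k_{11},\dots,k_{44}\rangle$ and the $6$-dimensional odd part $\langle b_1,\dots,b_6\rangle$. Consequently every $M_\sigma$, the descent datum, and the resulting $R$ respect this splitting, which is precisely the assertion that $R = \begin{bmatrix} R_1 & 0 \\ 0 & R_2 \end{bmatrix}$ with $R_1$ of size $10$ (acting on the $u_i$, coming from the even coordinates $k_{ij}$) and $R_2$ of size $6$ (acting on the $v_j$, coming from the odd coordinates $b_i$). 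This is also compatible with the involution $\iota_\epsilon$ of Notation~\ref{notation: involution}.

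The main obstacle is making all of this explicit, which is the real content of \cite[Appendix B]{explicit twist}. One must write down the $16\times16$ matrices of translation by each $2$-torsion point $\{(\omega_i,0),(\omega_j,0)\}$ in the coordinates $k_{ij},b_i$; normalise the scalar lifts so that $\sigma \mapsto M_\sigma$ is a genuine $\mathrm{GL}_{16}$-valued cocycle rather than merely a $\mathrm{PGL}_{16}$-valued one (the theta-group scalar ambiguity, whose obstruction lives in $\text{Br}(K)$ and vanishes on Selmer elements); and then push the defining equations through the twist, checking that the $72$ quadrics of Theorem~\ref{theorem: 72} transform into forms with coefficients in $K$ in the new coordinates $u_i,v_j$. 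The bookkeeping needed to keep the cocycle honestly linear and to certify the rationality of the twisted equations is the crux of the argument.
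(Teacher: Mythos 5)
The first thing to note is that the paper itself contains no proof of Theorem \ref{theorem: explicit twist of J}: it is imported verbatim from Flynn--Testa--van Luijk \cite[Proposition 7.2, Theorem 7.4, Appendix B]{explicit twist}, with the shape of the explicit matrices recalled in Remark \ref{rem: explicit twist of J formula}. So your attempt has to be measured against that cited construction. Measured that way, your argument is a correct existence proof of the statement, but by a genuinely different route. You twist $J \subset \pp^{15}$ along the $\mathrm{PGL}_{16}$-valued cocycle $\sigma \mapsto [M_{P_\sigma}]$ supplied by Proposition \ref{proposition_2_covering}, trivialize the ambient Brauer--Severi variety using local solvability of Selmer $2$-coverings (Proposition \ref{prop:2 covering has a point}) plus the Hasse principle --- the same trick the paper uses in $\pp^3$ in Proposition \ref{prop:BS diagram} --- and obtain the $10+6$ splitting from the commutation of $\tau_P$, $P \in J[2]$, with $[-1]$. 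By contrast, Flynn--Testa--van Luijk never argue cohomologically at the level of the ambient space: they represent $\epsilon$ concretely by $\delta$ in the \'etale algebra $L=K[x]/(f)$ with $N_{L/K}(\delta)=n^2$, choose $\zeta$ with $\zeta^2=\delta$, and write $R_1=G^{-1}T_1G$, $R_2=ST_2S^{-1}$ with $T_1, T_2$ diagonal in explicit bases, certifying $K$-rationality of the twisted equations by direct computation.

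Two caveats on your route. First, ``the resulting $R$ respects this splitting'' does not follow formally from the splitting of the descent datum; you need the extra (one-line) observation that any $\mathrm{GL}_{16}$-cocycle lifting $[M_\sigma]$ is a scalar multiple of $M_\sigma$, hence still block diagonal, after which Hilbert 90 applied to each block separately produces $R_1$ and $R_2$. Second, the Hasse principle confines your proof to Selmer elements over a number field, whereas the cited theorem (as the paper remarks immediately before stating it) holds for every $\epsilon \in H^1(G_K,J[2])$ over any field of characteristic $\neq 2$; that generality, and more importantly the \emph{explicit, implementable} matrix $R$ --- which is precisely what the algorithms of Sections 4, 6 and 7 consume --- come only from the explicit construction. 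Your proposal, as you candidly flag in your last paragraph, proves existence and the block shape cleanly but leaves the explicitness as a program; that program is the actual content of \cite[Appendix B]{explicit twist}.
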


\begin{remark}\label{rem: explicit twist of J formula}
	The explicit formula for $\phi_{\epsilon}$ is given in the beginning of \cite[Section 7]{explicit twist} and depends only on $\epsilon$ and the underlying genus two curve. Note that the coordinates $u_0, ..., u_9, v_1, ..., v_6$ are derived from another set of coordinates $c_0, ..., c_9, d_1, ..., d_6$ defined in \cite[Definitions 6.9, 6.11]{explicit twist} where $c_0, ..., c_9$ are even and $d_1, ..., d_6$ are odd. This set of coordinates are in general not  Galois invariant, however, they are in the case where all points of $J[2]$ are defined over the base field.\\
	\end{remark}

\section{Formula for the Cassels-Tate Pairing }\label{sec:formula-for-the-cassels-tate-pairing}
From now on, we always assume that the  genus two curve $\mathcal{C}$   is defined by $y^2=f(x)$ such that all roots of $f$ are defined over $K$. Note that this implies that all points in $J[2]$ are defined over $K$ which is equivalent to all the Weierstrass points defined over $K$. In this section, under the above assumption, we state and prove an explicit formula for the Cassels-Tate pairing on $\text{Sel}^{2}(J) \times \text{Sel}^2(J)$. \\

Let the genus two curve $\mathcal{C}$ be of the form 
$$\mathcal{C}: y^2 = \lambda(x-\omega_1)(x-\omega_2)(x-\omega_3)(x-\omega_4)(x-\omega_5)(x-\omega_6), $$
where $\lambda, \omega_i \in K$ and $\lambda \neq 0$.  Its Jacobian variety is denoted by $J$.\\

The 2-torsion subgroup $J[2]$ has basis  
\begin{align*}
P =\{(\omega_1, 0), (\omega_2, 0)\}, & \;\;\;Q =\{(\omega_1, 0), (\omega_3, 0)\},\\
R =\{(\omega_4, 0), (\omega_5, 0)\}, & \;\;\;S=\{(\omega_4, 0), (\omega_6, 0)\}.\\
\end{align*} 
By the  discussion at the end of Section \ref{sec:the-set-up}, the Weil pairing is given relative to this basis by:
\begin{equation}\label{equation: WP matrix}
W=\begin{bmatrix}
1&-1&1&1\\
-1&1&1&1\\
1&1&1&-1\\
1&1&-1&1\\
\end{bmatrix}.
\end{equation}
More explicitly,  $W_{ij}$ denotes the Weil pairing between the $i^{th}$ and $j^{th}$ generators. \\

We now show that this choice of basis determines an isomorphism $H^1(G_K, J[2]) \cong (K^*/(K^*)^2)^4$. Consider the map $ J[2] \xrightarrow{w_2} (\mu_2(\bar{K}))^4, $ where  $w_2$ denotes taking the  Weil pairing with $P, Q, R, S$.  Since $P, Q, R, S$ form a basis for $J[2]$ and the Weil pairing is a nondegerate bilinear pairing, we get that $w_2$ is injective. This implies that $w_2$ is an isomorphism as $|J[2]|= |(\mu_2(\bar{K}))^4|=16$. We then get

\begin{equation}\label{eq: isomorphism of H^1(J[2])}
H^1(G_K, J[2]) \xrightarrow{w_{2, *}}H^1(G_K, (\mu_2(\bar{K}))^4) \cong (K^*/(K^*)^2)^4,
\end{equation}
where $w_{2, *}$ is induced by $w_2$ and $\cong$ is the Kummer isomorphism derived from Hilbert's Theorem 90. Since  the map \eqref{eq: isomorphism of H^1(J[2])} is an isomorphism, we can represent elements in $H^1(G_K, J[2])$ by elements in $(K^*/(K^*)^2)^4$.\\

Before stating and proving the formula for the Cassels-Tate pairing, we first state and prove the following lemma. \\

\begin{lemma}\label{lem: existence of rational divisor D_T}
	For $\epsilon \in \mathrm{Sel}^2(J)$, let $(J_{\epsilon}, \pi_{\epsilon})$  denote the corresponding $2$-covering of $J$. Hence, there exists an isomorphism $\phi_{\epsilon}: J_{\epsilon} \rightarrow J$ defined over $\bar{K}$ such that $[2] \circ \phi_{\epsilon}= \pi_{\epsilon}$. Suppose $T \in J(K)$ and $T_1 \in J(\bar{K})$ satisfy $2T_1=T$. Then 
	\begin{enumerate}[label=(\roman*)]
		\item There exists a $K$-rational divisor $D_T$ on $J_{\epsilon}$ which represents the divisor class of $\phi_{\epsilon}^*(\tau_{T_1}^*(2\Theta))$.
		\item Let $D$ and $D_T$ be   $K$-rational divisors on $J_{\epsilon}$ representing the divisor class of $\phi_{\epsilon}^*(2\Theta)$ and $\phi_{\epsilon}^*(\tau_{T_1}^*(2\Theta))$ respectively. Then $D_T-D \sim \phi_{\epsilon}^*(\tau_T^*\Theta-\Theta)$. Suppose $T$ is a two torsion point. Then $2D_T -2D$ is a $K$-rational principal divisor. Hence, there exists a $K$-rational function $f_T$ on $J_{\epsilon}$ such that $\text{div}(f_T)= 2D_T -2D$. \\
		
	\end{enumerate}
\end{lemma}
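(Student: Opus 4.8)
The plan is to prove (i) in two stages: first show that the geometric divisor class $c := [\phi_{\epsilon}^*(\tau_{T_1}^*(2\Theta))] \in \Pic(J_{\epsilon,\bar{K}})$ is $G_K$-invariant, and then show that it descends to an honest $K$-rational divisor because the associated Brauer obstruction vanishes. For the invariance I would fix $\sigma \in G_K$ and exploit two facts coming from the hypotheses. Since $T = 2T_1 \in J(K)$, the difference $S_\sigma := \sigma(T_1) - T_1$ lies in $J[2]$; and by Proposition~\ref{proposition_2_covering} there is $P_\sigma \in J[2]$ with $\phi_{\epsilon}\circ\sigma(\phi_{\epsilon}^{-1}) = \tau_{P_\sigma}$, so $\phi_{\epsilon}^{\sigma} = \tau_{-P_\sigma}\circ\phi_{\epsilon}$. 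Using that $\Theta$, and hence the class $[2\Theta]$, is defined over $K$ (all Weierstrass points are $K$-rational), a direct manipulation of the pullbacks gives
$$\sigma\cdot c = \big[\phi_{\epsilon}^*(\tau_{T_1 + U_\sigma}^*(2\Theta))\big], \qquad U_\sigma := S_\sigma - P_\sigma \in J[2].$$
By the theorem of the square, $\tau_{T_1+U_\sigma}^*(2\Theta) \sim \tau_{T_1}^*(2\Theta) + \tau_{U_\sigma}^*(2\Theta) - 2\Theta$, and since $U_\sigma \in J[2]$, Remark~\ref{rem: translation by 2 torsion is linear on K} gives $\tau_{U_\sigma}^*(2\Theta) \sim 2\Theta$. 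Hence $\sigma\cdot c = c$, so $c \in \Pic(J_{\epsilon,\bar{K}})^{G_K}$.

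To descend $c$ I would use the low-degree exact sequence attached to $\mathbb{G}_m$ on the geometrically integral projective variety $J_{\epsilon}$,
$$0 \to \Pic(J_{\epsilon}) \to \Pic(J_{\epsilon,\bar{K}})^{G_K} \xrightarrow{\;\Delta\;} \text{Br}(K),$$
(with injectivity on the left from Hilbert 90), so that it suffices to check $\Delta(c) = 0$. The map $\Delta$ commutes with restriction to each completion $K_v$, and because $\epsilon$ is a Selmer element we have $J_{\epsilon}(K_v)\neq\emptyset$ for every place $v$ by Proposition~\ref{prop:2 covering has a point}; a $K_v$-rational point trivializes the local obstruction, so $\text{res}_v(\Delta(c)) = 0$ for all $v$. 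The Hasse principle for the Brauer group, $\text{Br}(K)\hookrightarrow \bigoplus_v \text{Br}(K_v)$, then forces $\Delta(c) = 0$, producing a $K$-rational divisor $D_T$ with class $c$, which proves (i).

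For (ii), I would compute directly. With $D_T \sim \phi_{\epsilon}^*(\tau_{T_1}^*(2\Theta))$ and $D \sim \phi_{\epsilon}^*(2\Theta)$, the theorem of the square yields
$$\tau_{T_1}^*(2\Theta) - 2\Theta \sim 2(\tau_{T_1}^*\Theta - \Theta) = 2\lambda(T_1) = \lambda(2T_1) = \lambda(T) = \tau_T^*\Theta - \Theta,$$
so $D_T - D \sim \phi_{\epsilon}^*(\tau_T^*\Theta - \Theta)$. If moreover $T \in J[2]$, then $2(\tau_T^*\Theta - \Theta) = \tau_T^*(2\Theta) - 2\Theta \sim 0$ by Remark~\ref{rem: translation by 2 torsion is linear on K}, whence $2(D_T - D) \sim \phi_{\epsilon}^*(0) = 0$; thus $2D_T - 2D$ is principal over $\bar{K}$, say $\text{div}(f) = 2D_T - 2D$ with $f \in \bar{K}(J_{\epsilon})^*$. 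Because $2D_T - 2D$ is $K$-rational, for each $\sigma$ the function $f^{\sigma}/f$ has trivial divisor and so is a constant $c_\sigma \in \bar{K}^*$; the assignment $\sigma \mapsto c_\sigma$ is a $1$-cocycle, so by Hilbert 90 I can rescale $f$ by an element of $\bar{K}^*$ to obtain a $G_K$-invariant, hence $K$-rational, function $f_T$ with $\text{div}(f_T) = 2D_T - 2D$.

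I expect the main obstacle to be the descent in (i): passing from the $G_K$-invariant class $c$ to an actual $K$-rational divisor, rather than merely an element of $\Pic(J_{\epsilon,\bar{K}})^{G_K}$. This is precisely where the Selmer (everywhere-locally-soluble) hypothesis is indispensable, feeding through the vanishing of the $\text{Br}(K)$-obstruction via the Hasse principle; by contrast, the Galois-invariance computation and the descent of the function $f_T$ in (ii) are routine applications of the theorem of the square and Hilbert 90.
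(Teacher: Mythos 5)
Your proof is correct, but part (i) takes a genuinely different route from the paper. The paper's argument is geometric: it first shows that $(J_{\epsilon}, \tau_T \circ \pi_{\epsilon})$ is itself a $2$-covering of $J$, corresponding to the class $\epsilon + \delta(T)$ (again a Selmer element), and then invokes Proposition~\ref{prop:BS diagram}: the linear system $|\phi_{\epsilon}^*(\tau_{T_1}^*(2\Theta))|$ gives a $K$-rational map to a Brauer--Severi variety, which by everywhere-local solvability and the Hasse principle for Brauer--Severi varieties is isomorphic to $\pp^3$; the divisor $D_T$ is then the pullback of a hyperplane. You instead prove Galois-invariance of the class $[\phi_{\epsilon}^*(\tau_{T_1}^*(2\Theta))]$ by a direct cocycle computation (using $\sigma(T_1)-T_1 \in J[2]$, Proposition~\ref{proposition_2_covering}, the theorem of the square and Remark~\ref{rem: translation by 2 torsion is linear on K}), and then descend it through the Hochschild--Serre sequence $0 \to \Pic(J_{\epsilon}) \to \Pic(J_{\epsilon,\bar{K}})^{G_K} \to \mathrm{Br}(K)$, killing the obstruction by local points (Proposition~\ref{prop:2 covering has a point}) and Albert--Brauer--Hasse--Noether. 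The two arguments consume the same arithmetic inputs --- everywhere-local solvability of $J_\epsilon$ plus a Hasse principle, in different guises --- but yours is more abstract and arguably cleaner as a pure existence statement, while the paper's is constructive: it exhibits $D_T$ as the pullback of a hyperplane section under an explicitly computable $K$-rational morphism $J_{\epsilon} \to \pp^3$, and it is exactly this explicit realization that the algorithm of Sections~\ref{sec:explicit-computation-of-d} and \ref{sec:explicit-computation-of-dp-dq-dr-ds} relies on; a non-constructive $D_T$ would not suffice there. Your part (ii) coincides with the paper's (polarization homomorphism plus $2\lambda(T)=0$), and in fact fills in a detail the paper leaves implicit, namely the Hilbert~90 rescaling showing that a $K$-rational principal divisor is the divisor of a $K$-rational function.
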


\begin{proof}
	By definition of a 2-covering, $[2] \circ \phi_{\epsilon}= \pi_{\epsilon}$ is a morphism defined over $K$. Also, by Proposition \ref{proposition_2_covering},  $\phi_{\epsilon} \circ (\phi_{\epsilon}^{-1})^{\sigma} = \tau_{\epsilon_{\sigma}}$ for all $\sigma \in G_K$, where $(\sigma \mapsto \epsilon_{\sigma})$ is a cocycle representing $\epsilon$. Since $[2] \circ \tau_{T_1} \circ \phi_{\epsilon}=  \tau_T \circ [2]\circ \phi_{\epsilon}=\tau_T \circ \pi_{\epsilon}$ and  $\tau_T$ is defined over $K$, $(J_{\epsilon}, \tau_T \circ \pi_{\epsilon})$ is also a 2-covering of $J$. We compute $\tau_{T_1} \circ \phi_{\epsilon} \circ ((\tau_{T_1} \circ \phi_{\epsilon} )^{-1})^{\sigma }= \tau_{T_1} \circ \phi_{\epsilon} \circ (\phi_{\epsilon}^{-1})^{\sigma} \circ \tau_{-\sigma(T_1)}=\tau_{\epsilon_{\sigma}} \circ \tau_{T_1} \circ \tau_{-\sigma(T_1)},$ for all $\sigma \in G_K$. This implies the 2-covering $(J_{\epsilon}, \tau_T \circ \pi_{\epsilon})$  corresponds to the element in $ H^1(G_K, J[2])$ that is represented  by the cocycle $(\sigma \mapsto \epsilon_{\sigma} + T_1 -\sigma(T_1))$. Hence,  $(J_{\epsilon}, \tau_T \circ \pi_{\epsilon})$ is the 2-covering of $J$ corresponding to $\epsilon+ \delta(T)$, where  $\delta$ is the connecting map as in \eqref{eqn: connecting map}. By Proposition \ref{prop:BS diagram}, there exists a commutative diagram:
	
	\[\begin{tikzcd}[column sep=large]
	J_{\epsilon} \arrow[r, "|\phi_{\epsilon}^*(\tau_{T_1}^*(2\Theta))|"] \arrow[d, "\tau_{T_1} \circ \phi_{\epsilon}"]&\pp^3 \arrow[d, "\psi_{\epsilon}"]\\
	J \arrow[r, "|2\Theta|"]& \pp^3,
	\end{tikzcd}
	\]
	where the morphism $J_{\epsilon} \xrightarrow{|\phi_{\epsilon}^*(\tau_{T_1}^*(2\Theta))|}\pp^3$ is defined over $K$. So the pull back of a  hyperplane section via this morphism gives us a rational divisor $D_T$ representing the divisor class of $\phi_{\epsilon}^*(\tau_{T_1}^*(2\Theta))$ as required by (i).\\
	
	Since the polarization $\lambda: J \rightarrow J^{\vee}$ is an isomorphism and $2T_1=T$, we have $\phi_{\epsilon}^*(\lambda(T)) = [\phi_{\epsilon}^*(\tau_T^*\Theta- \Theta)] = [\phi_{\epsilon}^*(\tau_{T_1}^*(2\Theta))] - [\phi_{\epsilon}^*(2\Theta)]=[D_T]-[D]$.  The fact that $T$ is a two torsion point implies that $2\phi_{\epsilon}^*(\lambda(P))= 0$. Hence, $2D_T-2D$ is a $K$-rational principal divisor which gives (ii).\\

\end{proof}

The following remark explains how we will use Lemma \ref{lem: existence of rational divisor D_T}  in the formula for the Cassels-Tate pairing on $\text{Sel}^2(J) \times \text{Sel}^2(J)$.\\

\begin{remark}\label{rem: DP, DQ, DR, DS}
	Applying Lemma  \ref{lem: existence of rational divisor D_T}(i) with $T = \mathcal{O}_J, P, Q, R, S \in J[2]$ gives divisors $D=D_{\mathcal{O}_J}$ and $D_P, D_Q, D_R D_S$. Then by Lemma  \ref{lem: existence of rational divisor D_T}(ii), there exist $K$-rational functions $f_P, f_Q, f_R, f_S$ on $J_{\epsilon}$ such that $\text{div}(f_T)= 2D_T -2D$  for $T=P, Q, R, S$.\\
	
	
	
\end{remark}
\begin{theorem}\label{thm: 1}
	Let $J$ be the Jacobian variety of a genus two curve $\mathcal{C}$ defined over a number field $K$ where all points in $J[2]$ are defined over $K$.	For any $\epsilon, \eta \in \mathrm{Sel}^2(J)$, let $(J_{\epsilon}, [2]\circ \phi_{\epsilon})$ be the $2$-covering of $J$ corresponding to $\epsilon$ where $\phi_{\epsilon}: J_{\epsilon} \rightarrow J$ is an isomorphism defined over $\bar{K}$. Fix a choice of basis $P, Q, R, S$ for $J[2]$, with the Weil pairing given by matrix \eqref{equation: WP matrix}. Let $(a, b, c, d)$ denote the image of $\eta$ via  $H^1(G_K, J[2]) \cong(K^*/(K^*)^2)^4$, where this is the isomorphism  induced by taking the Weil pairing with $P, Q, R, S$.  Let $f_P, f_Q, f_R, f_S$ be the $K$-rational functions on $J_{\epsilon}$ defined in Remark \ref{rem: DP, DQ, DR, DS}. Then the  Cassels-Tate pairing $\langle\;,\; \rangle_{CT}: \mathrm{Sel}^2(J) \times \mathrm{Sel}^2(J) \rightarrow \{\pm 1\}$ is given by
	$$\langle \epsilon, \eta\rangle_{CT} = \prod_{\text{place } v}(f_P(P_v), b)_v(f_Q(P_v), a)_v(f_R(P_v), d)_v(f_S(P_v), c)_v,$$
	where $(\;,\;)_v$ denotes the Hilbert symbol for a given place $v$ of $K$ and  $P_v$ is an arbitrary choice of a local point on $J_{\epsilon}$ avoiding the zeros and poles of $f_P, f_Q, f_R, f_S$.\\

\end{theorem}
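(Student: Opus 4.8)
The plan is to evaluate the homogeneous space definition of the Cassels--Tate pairing from Section~\ref{sec:definition-of-the-cassels-tate-pairing} using the explicit ingredients supplied by Lemma~\ref{lem: existence of rational divisor D_T} and Remark~\ref{rem: DP, DQ, DR, DS}, reducing each connecting map to a concrete cocycle computation. Throughout I would take $X=J_\epsilon$ as the principal homogeneous space representing the image $a$ of $\epsilon$ in $\Sh(J)[2]$; local points $P_v\in X(K_v)$ exist by Proposition~\ref{prop:2 covering has a point}. Under the canonical $G_K$-isomorphism $\Pic^0(X_{\bar K})\cong \Pic^0(J_{\bar K})=J^\vee(\bar K)$ induced by $\phi_\epsilon^*$, Lemma~\ref{lem: existence of rational divisor D_T}(ii) identifies the class $[D_T-D]$ with $\lambda(T)$ for each $T\in J[2]$, so the four $K$-rational classes $\lambda(P),\lambda(Q),\lambda(R),\lambda(S)$ form a basis of the $2$-torsion $J^\vee[2]=\Pic^0(X_{\bar K})[2]$.

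First I would compute the class into which $\eta$ is sent. By definition this is the image in $H^1(G_K,J^\vee)$ of $b_0:=\lambda_*(\eta)\in H^1(G_K,J^\vee[2])$. The principal polarization identifies $J^\vee[2]$ with $\Hom(J[2],\mu_2)$ carrying $\lambda(T)$ to $e_2(T,-)$, so $b_0$, viewed as a homomorphism $J[2]\to H^1(G_K,\mu_2)=K^*/(K^*)^2$, sends $T'$ to $e_2(\eta,T')$; hence $(b_0(P),b_0(Q),b_0(R),b_0(S))=(a,b,c,d)$ by the definition of the isomorphism \eqref{eq: isomorphism of H^1(J[2])}. Expressing $b_0$ in the basis above amounts to solving $b_0(T')=\sum_T e_2(T,T')\,\alpha_T$, i.e.\ inverting over $\mathbb{F}_2$ the matrix recording which entries of \eqref{equation: WP matrix} equal $-1$; since that matrix is the transposition interchanging the $P,Q$ and the $R,S$ coordinates, one finds $\alpha_P=b,\ \alpha_Q=a,\ \alpha_R=d,\ \alpha_S=c$, giving
\[
b_0=\lambda(P)\otimes b+\lambda(Q)\otimes a+\lambda(R)\otimes d+\lambda(S)\otimes c,
\]
which is precisely the source of the crossed pairing in the statement.

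Next I would trace this class through the two connecting maps, working one summand $\lambda(T)\otimes\alpha_T$ at a time by bilinearity. Writing $\chi=\chi_{\alpha_T}\colon G_K\to\{0,1\}$ for the quadratic character attached to $\alpha_T$, a representing cocycle is $\sigma\mapsto\chi(\sigma)\,[D_T-D]$; lifting each value to the $K$-rational divisor $\chi(\sigma)(D_T-D)\in\Div^0(X_{\bar K})$ and applying the coboundary gives
\[
\bar b_\sigma+{}^{\sigma}\bar b_\tau-\bar b_{\sigma\tau}=\big(\chi(\sigma)+\chi(\tau)-\chi(\sigma\tau)\big)(D_T-D)=2\,\varepsilon_{\sigma,\tau}\,(D_T-D),
\]
where $\varepsilon_{\sigma,\tau}\in\{0,1\}$ is the carry, equal to $1$ exactly when $\chi(\sigma)=\chi(\tau)=1$. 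Since $2(D_T-D)=\text{div}(f_T)$ by Remark~\ref{rem: DP, DQ, DR, DS}, the resulting class in $H^2(G_K,\bar K(X)^*/\bar K^*)$ lifts to the cochain $f'_{\sigma,\tau}=f_T^{\,\varepsilon_{\sigma,\tau}}\in\bar K(X)^*$. Evaluating at a local point $P_v$ as in Remark~\ref{rem:CT_Selmer}(ii) yields the local class $c_{v;\sigma,\tau}=f_T(P_v)^{\varepsilon_{\sigma,\tau}}$, which is exactly the standard cyclic-algebra cocycle for the Hilbert symbol $(\alpha_T,f_T(P_v))_v$; hence $\text{inv}_v(c_v)$ corresponds to $(f_T(P_v),\alpha_T)_v\in\{\pm1\}$. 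Summing the four summands — so that the four functions are evaluated at one common point $P_v$ off all their zeros and poles — and then over all places produces the claimed product.

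The main obstacle will be this middle cocycle chase: one must pass correctly between the $\mathbb{Z}/2$-valued character $\chi$ and its $\{0,1\}$-valued lift so that the coboundary produces $2\varepsilon_{\sigma,\tau}(D_T-D)$ on the nose (not merely modulo $2$), and then recognise the evaluated cochain as the cyclic-algebra representative of the Hilbert symbol. A secondary point requiring care is well-definedness: the explicit ingredients ($D_T$, the functions $f_T$ up to $K^*$-scalars, the representatives of $\alpha_T$, and the local points $P_v$) are not canonical, but Remark~\ref{rem:CT_Selmer}(i) guarantees the pairing is independent of all choices, while at the level of the formula the scalar ambiguities alter each local factor by a symbol of the form $(c,\alpha_T)_v$ whose product over all $v$ is trivial by reciprocity. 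I would close by verifying that the four classes $\lambda(T)$ genuinely give a basis, legitimising the decomposition of $b_0$, and that a common local point may be chosen off all four supports; both are routine.
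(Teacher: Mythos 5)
Your proposal is correct and follows essentially the same route as the paper's own proof: identify $\eta$ with the crossed combination (so that $b,a,d,c$ attach to $P,Q,R,S$ respectively), push it through the two connecting maps using the $K$-rational divisors $D_T-D$, use the carry identity $\chi(\sigma)+\chi(\tau)-\chi(\sigma\tau)=2\varepsilon_{\sigma,\tau}$ together with $\operatorname{div}(f_T)=2D_T-2D$, and evaluate at local points to recognise quaternion-algebra cocycles and hence Hilbert symbols. The only difference is presentational: you phrase the first step via $\lambda_*(\eta)\in H^1(G_K,J^{\vee}[2])\cong H^1(G_K,\Hom(J[2],\mu_2))$ and invert the Weil-pairing matrix, whereas the paper writes the cocycle for $\eta$ directly in the basis $P,Q,R,S$ and treats all four summands at once — these are the same computation.
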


\begin{proof}

	We know $\eta \in H^1(G_K, J[2])$ corresponds to $(a, b, c, d) \in (K^*/(K^*)^2)^4$ via taking the Weil pairing with $P, Q, R, S$. Hence, $\eta$ is represented by the cocycle $$\sigma \mapsto \tilde{b}_{\sigma}P + \tilde{a}_{\sigma}Q+ \tilde{d}_{\sigma}R+ \tilde{c}_{\sigma}S,$$
	where $\sigma \in G_K$ and for each element $x \in K^*/(K^*)^2$, we define $\tilde{x}_{\sigma} \in \{0, 1\}$ such that $(-1)^{\tilde{x}_{\sigma}}=\sigma(\sqrt{x})/\sqrt{x}$. \\
	
	Then the  image of $\eta$ in $H^1(G_K, \text{Pic}^0(J_{\epsilon}))$ is represented by the cocycle that sends $\sigma \in G_K$ to 
	$$\tilde{b}_{\sigma} \phi_{\epsilon}^*[\tau_P^*\Theta-\Theta] + \tilde{a}_{\sigma}\phi_{\epsilon}^*[\tau_Q^*\Theta-\Theta]+ \tilde{d}_{\sigma}\phi_{\epsilon}^*[\tau_R^*\Theta-\Theta]+ \tilde{c}_{\sigma}\phi_{\epsilon}^*[\tau_S^*\Theta-\Theta].$$

	By Remark \ref{rem: DP, DQ, DR, DS}, there exist $K$-rational divisors $D_P, D_Q, D_R, D_S$ on $J_{\epsilon}$ such that the above cocycle sends $\sigma \in G_K$ to 
	$$\tilde{b}_{\sigma} [D_P-D] + \tilde{a}_{\sigma}[D_Q-D] + \tilde{d}_{\sigma}[D_R-D]+ \tilde{c}_{\sigma}[D_S-D].$$
	
	We need to map this element in $H^1(G_K, \text{Pic}^0(J_{\epsilon}))$ to an element in $H^2(G_K, \bar{K}(J_{\epsilon})^*/\bar{K}^*)$ via the connecting map induced by the short exact sequence $$0 \rightarrow \bar{K}(J_{\epsilon})^*/\bar{K}^*  \rightarrow \text{Div}^0(J_{\epsilon}) \rightarrow \text{Pic}^0(J_{\epsilon}) \rightarrow 0.$$ Hence, by the formula for the connecting map   and the fact that the divisors $D, D_P, D_Q, D_R,$ $ D_S$ are all $K$-rational, we get that the corresponding element in $H^2(G_K, \bar{K}(J_{\epsilon})^*/\bar{K}^*)$ has image in $H^2(G_K, \text{Div}^0(J_{\epsilon}))$ represented by the following cocycle:
	\begin{align*}
	(\sigma,  \tau) \mapsto & (\tilde{b}_{\tau}-\tilde{b}_{\sigma\tau}+ \tilde{b}_{\sigma}) (D_P-D) + (\tilde{a}_{\tau}-\tilde{a}_{\sigma\tau}+ \tilde{a}_{\sigma})(D_Q-D) \\
	&+ (\tilde{d}_{\tau}-\tilde{d}_{\sigma\tau}+ \tilde{d}_{\sigma})(D_R-D)+(\tilde{c}_{\tau}-\tilde{c}_{\sigma\tau}+ \tilde{c}_{\sigma})(D_S-D),
	\end{align*}
	for  $\sigma,  \tau \in G_K$.\\
	
	It can be checked that,  for $x \in K^*/(K^*)^2$ and $\sigma, \tau \in G_K$, we get $\tilde{x}_{\tau}-\tilde{x}_{\sigma\tau}+ \tilde{x}_{\sigma}=2$ if both $\sigma$ and $\tau$ flip $\sqrt{x}$ and otherwise it is equal to zero. Define $\iota_{\sigma, \tau, x} = 1$ if both $\sigma$ and $\tau$ flip $\sqrt{x}$ and otherwise $\iota_{\sigma, \tau, x} = 0$. Note that the map that sends $x \in K^*/(K^*)^2$ to the  class of $(\sigma, \tau) \mapsto \iota_{\sigma, \tau, x}$ explicitly realizes the map $K^*/(K^*)^2 \cong H^1(G_K, \frac{1}{2}\zz/\zz) \subset H^1(G_K, \Q/\zz) \rightarrow H^2(G_K, \zz)$. Then, for $\sigma, \tau \in G_K$, the cocycle in the last paragraph sends $(\sigma, \tau)$ to 
	$$
	\iota_{\sigma, \tau, b} \cdot 2(D_P-D) + \iota_{\sigma, \tau, a} \cdot 2(D_Q-D) +  \iota_{\sigma, \tau, d} \cdot 2(D_R-D)+ \iota_{\sigma, \tau, c} \cdot 2(D_S-D).
	$$
	
	Hence, by Remark \ref{rem: DP, DQ, DR, DS}, the corresponding element in $H^2(G_K, \bar{K}(J_{\epsilon})^*/\bar{K}^*)$ is represented by
	$$(\sigma, \tau) \mapsto [f_P^{\iota_{\sigma, \tau, b}} \cdot f_Q^{\iota_{\sigma, \tau, a}} \cdot f_R^{\iota_{\sigma, \tau, d}} \cdot f_S^{\iota_{\sigma, \tau, c}}],$$
	for all $\sigma, \tau \in G_K$.\\
	
	For each place $v$ of $K$, following the homogeneous space definition of $\langle \epsilon, \eta \rangle_{CT}$ as given in Section \ref{sec:definition-of-the-cassels-tate-pairing}, we obtain an element in $H^2(G_{K_v}, \bar{K_v^*})$ from the long exact sequence  induced by the short exact sequence $0 \rightarrow \bar{K_v^*} \rightarrow \bar{K_v}(J_{\epsilon})^* \rightarrow \bar{K_v}(J_{\epsilon})^*/\bar{K_v}^* \rightarrow 0$. The long exact sequence is the local version of \eqref{eqn:def} with $X$ replaced by $J_{\epsilon}$. By Remark \ref{rem:CT_Selmer}(ii), this element in $H^2(G_{K_v}, \bar{K_v}^*)$ can be represented by\\
	$$(\sigma, \tau) \mapsto f_P(P_v)^{\iota_{\sigma, \tau, b}} \cdot f_Q(P_v)^{\iota_{\sigma, \tau, a}} \cdot f_R(P_v)^{\iota_{\sigma, \tau, d}} \cdot f_S(P_v)^{\iota_{\sigma, \tau, c}}, $$
	for all $\sigma, \tau \in G_K$ and some local point $P_v \in J_{\epsilon}(K_v)$ avoiding the zeros and poles of $f_P, f_Q, f_R, f_S$. \\
	
	Hence, the above element in $\text{Br}(K_v) \cong H^2(G_{K_v}, \bar{K_v}^*)$ is the class of the  tensor product of quaternion algebras
	$$(f_P(P_v), b) +(f_Q(P_v), a) + (f_R(P_v), d)+ (f_S(P_v), c).$$
	Then, we have that
	\begin{align*}
	\text{inv}&\big((f_P(P_v), b) +(f_Q(P_v), a) + (f_R(P_v), d)+ (f_S(P_v), c)\big)\\
	&= (f_P(P_v, b)_v (f_Q(P_v), a)_v(f_R(P_v), d)_v (f_S(P_v), c)_v,
	\end{align*}
	where $(\;,\; )_v$ denotes the Hilbert symbol: $K_v^* \times K_v^* \rightarrow \{1, -1\}$, as required.\\
	\end{proof}

\begin{remark}\label{rem: CTP finite product}
	In  Section \ref{sec:prime-bound}, we will directly show that the formula for the Cassels-Tate pairing on $\text{Sel}^2(J) \times \text{Sel}^2(J)$ given in Theorem \ref{thm: 1} is a finite product.\\
	
\end{remark}

\section{Explicit Computation}
In this section, we explain how we explicitly compute the Cassels-Tate pairing on $\text{Sel}^2(J) \times \text{Sel}^2(J)$ using the formula given in Theorem \ref{thm: 1}, under the assumption that all points in $J[2]$ are defined over $K$. We fix $\epsilon \in \text{Sel}^2(J)$ and $(J_{\epsilon}, [2] \circ \phi_{\epsilon})$, the 2-covering of $J$ corresponding to $\epsilon$ with $\phi_{\epsilon}: J_{\epsilon} \subset \pp^{15}\rightarrow J \subset \pp^{15}$ given in Theorem \ref{theorem: explicit twist of J}. The statement of Theorem \ref{thm: 1} suggests that we need to compute the $K$-rational divisors $D, D_P, D_Q, D_R, D_S$ on $J_{\epsilon}$ and the $K$-rational function $f_P, f_Q, f_R, f_S$ on $J_{\epsilon}$, as in Remark \ref{rem: DP, DQ, DR, DS}.\\

\subsection{Computing the twist of  the Kummer surface}\label{sec:computing-the-twist-of--the-kummer-surface}
We describe a practical method for computing a linear isomorphism $\psi_{\epsilon}: \mathcal{K}_{\epsilon} \subset \pp^3 \rightarrow \mathcal{K} \subset \pp^3$ corresponding to $\epsilon$. More explicitly, we need to compute $\psi_{\epsilon}$ such that $\psi_{\epsilon}(\psi_{\epsilon}^{-1})^{\sigma}$ is the action of translation by $\epsilon_{\sigma} \in J[2]$ on $\mathcal{K}$ and $(\sigma \mapsto \epsilon_{\sigma})$ is a cocycle representing $\epsilon$. Since all points in $J[2]$ are defined over $K$, the coboundaries in $B^1(G_K, J[2])$ are trivial. Therefore,  these conditions determine $\psi_{\epsilon}$ uniquely up to a change of linear automorphism of $\mathcal{K}_{\epsilon} \subset \pp^3$ over $K$. \\

For each $T \in J[2]$, we have an explicit formula for $M_T \in \text{GL}_4(K)$, given in \cite[Chapter 3 Section 2]{the book}, representing the action of translation by $T\in J[2]$ on the Kummer surface $\mathcal{K} \subset \pp^3$. It can be checked that they  form  a basis of $\text{Mat}_4(K)$ and we suppose $M_P^2 = c_PI, M_Q^2 = c_QI,M_R^2 = c_RI, \; \text{and} \; M_S^2 = c_SI.$
The explicit formulae for $c_P,c_Q,c_R,c_S$ can also be found in \cite[Chapter 3 Section 2]{the book}. Moreover, by \cite[Chapter 3 Section 3]{the book} and the Weil pairing relationship among the generators $P, Q, R, S$ of $J[2]$ specified by \eqref{equation: WP matrix}, we know that $[M_P, M_Q] = [M_R, M_S] = -I$ and the commutators of the other pairs are trivial.\\


Suppose $(a, b, c, d) \in (K^*/(K^*)^2)^4$ represents $\epsilon$. Let $A \in \text{GL}_4(\bar{K})$ represent the linear isomorphism $\psi_{\epsilon}$ and let $M_T'=A^{-1}M_TA \in \text{GL}_4(\bar{K})$ represent the action of $T$ on the twisted Kummer $\mathcal{K}_{\epsilon}$. It can be checked, see \cite[Lemma 3.2.1]{thesis}  for details, that  the set of matrices in $\text{PGL}_4(\bar{K})$ that commute with $M_T$ in $\text{PGL}_4(\bar{K})$ for all $T \in J[2]$ is $\{[M_T], T \in J[2]\}$. This implies that any $B \in \text{GL}_4(\bar{K})$ such that $[M_T]'=[B^{-1}M_TB] \in \text{PGL}_4(\bar{K})$ for any $T \in J[2]$ is equal to a multiple of $M_T$ composed with $A$ and so can also represents $\psi_{\epsilon}$. Hence, it will  suffice to compute the matrices $M_T'$.\\

Consider $[M_T'] \in \text{PGL}_4(\bar{K})$ and $\sigma \in G_K$. We have $$[M_T']([M_T']^{-1})^{\sigma}=[A^{-1}M_TA(A^{-1})^{\sigma}M_T^{-1}A^{\sigma}]\in \text{PGL}_4(\bar{K}).$$ Recall that for each element $x \in K^*/(K^*)^2$, we define $\tilde{x}_{\sigma} \in \{0, 1\}$ such that $(-1)^{\tilde{x}_{\sigma}}=\sigma(\sqrt{x})/\sqrt{x}$. Since $[A(A^{-1})^{\sigma}]=[M_P^{\tilde{b}_{\sigma}}M_Q^{\tilde{a}_{\sigma}}M_R^{\tilde{d}_{\sigma}}M_S^{\tilde{c}_{\sigma}}]$, we have $[M_T']$ is in $\text{PGL}_4(K)$.  This means that we can redefine $M_T'=\lambda_TA^{-1}M_TA$ for some  $\lambda_T\in \bar{K}$ such that $M_T'\in \text{GL}_4(K)$ by Hilbert Theorem~90. \\

Let $N_P=1/\sqrt{c_P}M_P, N_Q=1/\sqrt{c_Q}M_Q, N_R=1/\sqrt{c_R}M_R, N_S=1/\sqrt{c_S}M_S$. Then $N_T^2=I$ for $T=P, Q, R, S$.  Define $N_T'=A^{-1}N_TA\in \text{GL}_4(\bar{K})$ for $T=P, Q, R, S$. We note that $N_T', M_T'$ represent the same element in $\text{PGL}_4(K)$ and $N_T'^2=I$ for each $T=P, Q, R, S$. Suppose $M_P'^2=\alpha_PI, M_Q'^2=\alpha_QI, M_R'^2=\alpha_RI, M_S'^2=\alpha_SI.$ Then $N_T'=1/\sqrt{\alpha_T}M_T'$ for each $T=P, Q, R, S$. Note that there might be some sign issues here but they will not affect the later computation. Since 
$$N_P'(N_P'^{-1})^{\sigma}=A^{-1}N_PA(A^{-1})^{\sigma}(N_P^{-1})^{\sigma}A^{\sigma},$$
using $N_P=1/\sqrt{c_P}M_P$ and $N_P'=1/\sqrt{\alpha_P}M_P'$ with $M_P, M_P' \in \text{GL}_4(K)$, we compute that $$\frac{\sigma(\sqrt{\alpha_P})}{\sqrt{\alpha_P}}=\frac{\sigma(\sqrt{a})}{\sqrt{a}}\frac{\sigma(\sqrt{c_P})}{\sqrt{c_P}},$$
and similar equations for $Q, R, S.$\\

This implies that $\alpha_P=c_P  a$ up to squares in $K$ and so via rescaling $M_P'$ by elements in $K$, we have  $M_P'^2 = c_P  aI$. Similarly, $M_Q'^2 = c_Q bI, M_R'^2 = c_R cI, M_S'^2 = c_S  dI$.  We note that we also have  $[M_P', M_Q'] = [M_R', M_S'] = -I$ and the commutators of the other pairs are trivial. This implies that 

$$\text{Mat}_4(K) \cong (c_P  a, c_Q  b) \otimes (c_R  c, c_S  d)$$
$$M_P' \mapsto i_1 \otimes 1, M_Q' \mapsto j_1 \otimes 1, M_R' \mapsto 1 \otimes i_2, M_S' \mapsto 1\otimes j_2,$$
where $(c_P a, c_Q  b)$ and $(c_R  c,  c_S d)$ are quaternion algebras with generators  $i_1, j_1$ and $i_2, j_2$ respectively. In Section5, we will interpret this isomorphism as saying that the image of $\epsilon$ via the obstruction map is trivial.\\

Let $A = (c_P  a, c_Q b), B= (c_R  c, c_S d)$. By the argument above, we know $A \otimes B$ represents the trivial element in $\text{Br}(K)$ and an explicit isomorphism  $A \otimes B \cong \text{Mat}_4(K)$ will give us the explicit matrices $M_P', M_Q', M_R', M_S'$ we seek. Since the classes of $A, B$ are in  $\text{Br}[2]$, we have $A, B$ representing the same element in  $\text{Br}(K)$. This implies that $A \cong B$ over $K$, by Wedderburn's Theorem. We have the following lemma.\\

\begin{lemma}\label{lem:  computation of M_P' etc}
	Consider a tensor product of two quaternion algebras $A \otimes B$, where $A =(\alpha, \beta), \; B=(\gamma, \delta),$ with generators $i_1, j_1$ and $i_2, j_2$ respectively. Suppose there is an isomorphism  $\psi: B \xrightarrow{\sim} A$ given by
	\begin{align*} 
	i_2 \mapsto a_1 \cdot 1 + b_1 \cdot i_1 + c_1 \cdot j_1 + d_1 \cdot i_1j_1,\\
	j_2 \mapsto a_2 \cdot 1 + b_2 \cdot i_1 + c_2 \cdot j_1 + d_2 \cdot i_1j_1.\\
	\end{align*}
	Then there is an explicit isomorphism $$A \otimes B \cong \text{Mat}_4(K)$$
	given by
	$$i_1\otimes 1\mapsto M_{i_1} := \begin{bmatrix}
	0 & \alpha & 0 &0\\
	1 & 0 & 0 & 0\\
	0 & 0 & 0 & \alpha\\
	0 & 0 & 1 & 0\\
	
\end{bmatrix}$$

$$j_1  \otimes 1\mapsto M_{j_1} := \begin{bmatrix}
0 & 0 & \beta &0\\
0 & 0 & 0 & -\beta\\
1 & 0 & 0 & 0\\
0 & -1 & 0 & 0\\

\end{bmatrix}$$

$$1 \otimes i_2 \mapsto M_{i_2} := \begin{bmatrix}
a_1 & b_1 \cdot \alpha & c_1 \cdot \beta & - d_1 \cdot \alpha\beta\\
b_1 & a_1 & -d_1 \cdot \beta & c_1 \cdot \beta\\
c_1 & d_1 \cdot \alpha & a_1 & - b_1 \cdot \alpha\\
d_1 & c_1 & - b_1 & a_1\\

\end{bmatrix}$$

$$1 \otimes j_2 \mapsto M_{j_2} := \begin{bmatrix}
a_2 & b_2 \cdot \alpha & c_2 \cdot \beta & - d_2 \cdot \alpha\beta\\
b_2 & a_2 & -d_2 \cdot \beta & c_2 \cdot \beta\\
c_2 & d_2 \cdot \alpha & a_2 & - b_2 \cdot \alpha\\
d_2 & c_2 & - b_2 & a_2\\

\end{bmatrix}$$
\\
\end{lemma}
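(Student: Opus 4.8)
The plan is to realize $\text{Mat}_4(K)$ as $\text{End}_K(A)$ using the $K$-basis $1, i_1, j_1, i_1 j_1$ of $A$, and to recognize the four matrices $M_{i_1}, M_{j_1}, M_{i_2}, M_{j_2}$ as the images of the generators of $A\otimes B$ under a concrete homomorphism built from the left and right regular representations of $A$. Writing $L_a$ for the matrix of left multiplication $x\mapsto ax$ and $R_a$ for that of right multiplication $x\mapsto xa$ on $A$ in this basis, the first step is the direct check that
$$M_{i_1}=L_{i_1},\qquad M_{j_1}=L_{j_1},\qquad M_{i_2}=R_{\psi(i_2)},\qquad M_{j_2}=R_{\psi(j_2)},$$
where $\psi(i_2)=a_1+b_1 i_1+c_1 j_1+d_1 i_1 j_1$ and $\psi(j_2)=a_2+b_2 i_1+c_2 j_1+d_2 i_1 j_1$. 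This is a mechanical computation of the action of $i_1,j_1,\psi(i_2),\psi(j_2)$ on the four basis vectors, using $i_1^2=\alpha$, $j_1^2=\beta$ and $i_1 j_1=-j_1 i_1$.

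With this identification in place, I would assemble the homomorphism factor by factor. Left multiplication $L\colon A\to \text{Mat}_4(K)$ is a $K$-algebra homomorphism, so $M_{i_1},M_{j_1}$ automatically satisfy the defining relations $M_{i_1}^2=\alpha I$, $M_{j_1}^2=\beta I$, $M_{i_1}M_{j_1}=-M_{j_1}M_{i_1}$ of $A$. For the second factor the crucial point is that right multiplication is an \emph{anti}-homomorphism, $R_aR_b=R_{ba}$. Since $\psi$ is a $K$-algebra homomorphism fixing $K$, we have $\psi(i_2)^2=\psi(\gamma)=\gamma$ and $\psi(j_2)^2=\delta$, whence $M_{i_2}^2=R_{\psi(i_2)^2}=\gamma I$ and $M_{j_2}^2=\delta I$; and because $j_2 i_2=-i_2 j_2$ in $B$,
$$M_{i_2}M_{j_2}=R_{\psi(j_2)\psi(i_2)}=R_{\psi(j_2 i_2)}=-R_{\psi(i_2 j_2)}=-M_{j_2}M_{i_2}.$$
Thus $M_{i_2},M_{j_2}$ satisfy exactly the relations $i^2=\gamma$, $j^2=\delta$, $ij=-ji$ presenting $B$, so the assignment $i_2\mapsto M_{i_2}$, $j_2\mapsto M_{j_2}$ extends to a $K$-algebra homomorphism $g\colon B\to \text{Mat}_4(K)$ whose image lies in the right-multiplication subalgebra of $\text{End}_K(A)$.

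The images of $L$ and $g$ then commute, since left and right multiplications always commute, $L_aR_b=R_bL_a$. By the universal property of the tensor product of $K$-algebras, $L$ and $g$ combine into a single $K$-algebra homomorphism
$$h\colon A\otimes B\longrightarrow \text{Mat}_4(K),\qquad h(x\otimes y)=L_x\,g(y),$$
which sends $i_1\otimes 1, j_1\otimes 1, 1\otimes i_2, 1\otimes j_2$ to the four matrices in the statement. Finally $A\otimes B$ is a central simple $K$-algebra of dimension $16$ (a tensor product of quaternion algebras over the base field), so $\ker h$ is a two-sided ideal, hence $0$ or all of $A\otimes B$; as $h(1)=I\neq 0$ it is $0$ and $h$ is injective, and comparing with $\dim_K \text{Mat}_4(K)=16$ shows $h$ is an isomorphism.

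The step I expect to require the most care is the bookkeeping for the second factor. Because $R$ reverses products, the naive assignment $x\otimes y\mapsto L_x R_{\psi(y)}$ is \emph{not} an algebra homomorphism, and it is precisely the vanishing of the anticommutator $i_2 j_2+j_2 i_2$ that lets the generator assignment extend to an honest homomorphism $g$ on $B$. Keeping track of the resulting signs — and confirming that they reproduce the displayed matrices exactly rather than their quaternionic conjugates — is the one genuinely delicate point; everything else is either a standard structural fact about central simple algebras or a routine $4\times 4$ matrix computation.
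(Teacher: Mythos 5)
Your proof is correct and takes essentially the same route as the paper: both realize $\text{Mat}_4(K)$ as $\operatorname{End}_K(A)$ in the basis $1, i_1, j_1, i_1j_1$, with the first factor acting by left multiplication and the second factor acting, through $\psi$, by right multiplication (the paper phrases this as the standard isomorphism $A \otimes B^{\mathrm{op}} \cong \operatorname{End}_K(A)$, $u \otimes v \mapsto (x \mapsto ux\psi(v))$, and then reads off the matrices in this basis). Your additional care with the anti-homomorphism property of right multiplication --- checking that $M_{i_2}, M_{j_2}$ satisfy the defining relations of $B$ itself, so that the generator assignment extends to an honest homomorphism out of $A \otimes B$ rather than $A \otimes B^{\mathrm{op}}$, and then concluding by central simplicity and a dimension count --- makes explicit the sign-level identification $B \cong B^{\mathrm{op}}$ that the paper leaves implicit.
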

\begin{proof}

We have that $A \otimes A^{op}$ is isomorphic to a matrix algebra. More specifically, $A \otimes A^{op} \cong \text{End}_{K}(A)$ via $u \otimes v \mapsto (x \mapsto uxv)$, which makes $A \otimes A^{op} \cong \text{Mat}_4(K)$ after picking a basis for $A$. Hence,

$$
\begin{array}{ccc}
A \otimes B^{op} &\cong &\text{Mat}_4(K)\\
u \otimes v & \mapsto &(x \mapsto ux\psi(v)).\\
\end{array}
\;
$$
More explicitly, fixing the basis of $A$ to be $\{1, i_1, j_1, i_1j_1\}$, the isomorphism is as given in the statement of the  lemma.\\

\end{proof}

By taking $A = (c_P  a, c_Q  b), B= (c_R  c, c_S  d)$ in Lemma \ref{lem:  computation of M_P' etc}, we know that the matrices $M_P', M_Q', M_S', M_R'$ and  $M_{i_1}, M_{j_1}, M_{i_2}, M_{j_2}$ are equal up to conjugation by a matrix $C \in \text{GL}_4(K)$ via the Noether Skolem Theorem. After a change of coordinates for $\mathcal{K}_{\epsilon}\subset \pp^3$ according to $C$, we have that $M_P', M_Q', M_S', M_R'$ are equal to $M_{i_1}, M_{j_1}, M_{i_2}, M_{j_2}$. Lemma \ref{lem:  computation of M_P' etc} therefore reduces the problem of computing the $M_T'$ to that of computing an isomorphism between two quaternion algebras. See \cite[Corollary 4.2.3 ]{thesis} for the description of an explicit algorithm. Finally we solve for a matrix $A$ such that $M_T'=\lambda_TA^{-1}M_TA$ for some $\lambda_T \in \bar{K}$ and $T=P, Q, R, S$ by linear algebra.\\

\subsection{Explicit computation of $D$}\label{sec:explicit-computation-of-d}
In this section, we explain a method for computing the $K$-rational divisor $D$ on $J_{\epsilon}$ representing the divisor class $\phi_{\epsilon}^*(2\Theta)$. The idea is to compute it via the commutative diagram  \eqref{diagram: BS diagram} in  Remark \ref{rem: diagram of twited kummer}. \\

By Theorem \ref{theorem: explicit twist of J}, there is an explicit isomorphism $J_{\epsilon} \subset \pp^{15} \xrightarrow{\phi_{\epsilon}} J \subset \pp^{15}$. We write $u_0, ..., u_9, v_1, ..., v_6$ for the coordinates on the ambient space of $J_{\epsilon} \subset \pp^{15}$ and write $k_{11}, k_{12}, ..., k_{44}, b_1, ..., b_6$ for the coordinates on the ambient space of $J \subset \pp^{15}$. By the same theorem, $\phi_{\epsilon}$ is represented by a block diagonal matrix consisting of a block of size 10 corresponding to the even basis elements and a block of size 6 corresponding to the odd basis elements. Following Section \ref{sec:computing-the-twist-of--the-kummer-surface}, we can compute an explicit isomorphism  $\psi_{\epsilon} : \mathcal{K}_{\epsilon} \subset \pp^3 \rightarrow \mathcal{K} \subset \pp^3$ corresponding to $\epsilon$. We write $k_1', ..., k_4'$ for the coordinates on the ambient space of $\mathcal{K}_{\epsilon} \subset \pp^3$. Recall that since all points in $J[2]$ are defined over $K$, all coboundaries in $B^1(G_K, J[2])$ are trivial. So, we have that $\phi_{\epsilon}(\phi_{\epsilon}^{-1})^{\sigma}$ and $\psi_{\epsilon} (\psi_{\epsilon}^{-1})^{\sigma}$ both give the action of translation by some $\epsilon_{\sigma} \in J[2]$ such that $(\sigma \mapsto \epsilon_{\sigma})$  represents $\epsilon \in \text{Sel}^2(J)$. \\

Define $k_{ij}'=k_i'k_j'$. The isomorphism $\psi_{\epsilon}: \mathcal{K}_{\epsilon} \subset \pp_{k_i'}^3 \rightarrow \mathcal{K}\subset \pp_{k_i}^3$, induces a natural isomorphism $\tilde{\psi_{\epsilon}}: \pp^9_{k_{ij}'} \rightarrow \pp^9_{k_{ij}}$. More explicitly, suppose $\psi_{\epsilon}$ is represented by the $4 \times 4$ matrix $A$ where $(k_1':..., k_4') \mapsto (\sum_{i=1}^4A_{1i}k_i':...:\sum_{i=1}^4A_{4i}k_i')$. Then $\tilde{\psi_{\epsilon}}: \pp^9_{k_{ij}'} \rightarrow \pp^9_{k_{ij}}$ is given by $(k_{11}': k_{12}':... :k_{44}')\mapsto (\sum_{i, j=1}^4A_{1i}A_{1j}k_{ij}':\sum_{i, j=1}^4A_{1i}A_{2j}k_{ij}':...:\sum_{i, j=1}^4A_{4i}A_{4j}k_{ij}')$. \\

On the other hand, the isomorphism $\phi_{\epsilon}: J_{\epsilon} \subset \pp^{15}_{\{u_i, v_i\}} \rightarrow J \subset \pp^{15}_{\{k_{ij}, b_i\}}$ induces a natural  isomorphism $\tilde{\phi_{\epsilon}}: \pp^9_{u_i} \rightarrow \pp^9_{k_{ij}}$ represented by the $10\times 10$ block of the matrix representing $\phi_{\epsilon}$.  Since $\phi_{\epsilon}(\phi_{\epsilon}^{-1})^{\sigma}$ and $\psi_{\epsilon} (\psi_{\epsilon}^{-1})^{\sigma}$ both give the action of translation by some $\epsilon_{\sigma} \in J[2]$, we get  $\tilde{\phi_{\epsilon}}(\tilde{\phi_{\epsilon}}^{-1})^{\sigma}=\tilde{\psi_{\epsilon}}(\tilde{\phi_{\epsilon}}^{-1})^{\sigma}$. Therefore,  $\tilde{\psi_{\epsilon}}^{-1}\tilde{\phi_{\epsilon}}$ is defined over $K$ and we obtain the following commutative diagram that decomposes  the standard commutative diagram \eqref{diagram: BS diagram}:

\begin{equation}\label{eqn: D}
\begin{tikzcd}
J_{\epsilon} \subset \pp^{15}_{\{u_i, v_i\}} \arrow[r, "proj"] \arrow[d, "\phi_{\epsilon}"]& \pp^9_{u_i}  \arrow[rr, "(\tilde{\psi_{\epsilon}})^{-1}\tilde{\phi_{\epsilon}}"] \arrow[dr, "\tilde{\phi_{\epsilon}}"]&&\pp^9_{k_{ij}'} \arrow[dl, "\tilde{\psi_{\epsilon}}"] \arrow[r, "g_2"]&\mathcal{K}_{\epsilon} \subset \pp^3_{k_i'} \arrow[d, "\psi_{\epsilon}"]\\
J \subset \pp^{15}_{\{k_{ij}, b_i\}}\arrow[rr, "proj"]& & \pp^9_{k_{ij}} \arrow[rr, "g_1"]&&\mathcal{K} \subset \pp^3_{k_i},
\end{tikzcd}
\end{equation}
where $g_1:(k_{11}:...:k_{44}) \rightarrow (k_1: ...: k_4)$ and  $g_2:(k'_{11}:...:k'_{44}) \rightarrow (k'_1: ...: k'_4)$ are the projection maps. The composition of the morphisms on the bottom row gives the standard morphism $J \xrightarrow{|2\Theta|} \mathcal{K}\subset \pp^3$ and the composition of the morphisms on the top row  gives $J_{\epsilon} \xrightarrow{|\phi_{\epsilon}^*(2\Theta)|} \mathcal{K}_{\epsilon}\subset \pp^3$. \\

Let $D$ be the pull back on $J_{\epsilon}$ via $J_{\epsilon} \subset \pp^{15}_{\{u_i, v_i\}} \xrightarrow{proj} \pp^9_{u_i} \xrightarrow{(\tilde{\psi_{\epsilon}})^{-1}\tilde{\phi_{\epsilon}}} \pp^9_{k_{ij}'} \xrightarrow{proj} \pp^3_{k_i'}$ of the hyperplane section given  by $k_1'=0$.  This implies that $D$ is a $K$-rational divisor on $J_{\epsilon}$ representing the class of $\phi_{\epsilon}^*(2\Theta)$. Moreover, the pull back on $J_{\epsilon}$ via $J_{\epsilon} \subset \pp^{15}_{\{u_i, v_i\}} \xrightarrow{proj} \pp^9_{u_i} \xrightarrow{(\tilde{\psi_{\epsilon}})^{-1}\tilde{\phi_{\epsilon}}} \pp^9_{k_{ij}'}$ of the hyperplane section given  by $k_{11}'=0$ is $2D$. \\

\subsection{Explicit computation of $D_P, D_Q, D_R, D_S$}\label{sec:explicit-computation-of-dp-dq-dr-ds}

In this section, we explain  how to compute  the $K$-rational divisors $D_P, D_Q, D_R, D_S$ defined in Remark \ref{rem: DP, DQ, DR, DS}. More explicitly, for $T \in J[2]$, we give a method for computing a $K$-rational divisor  $D_T$ on $J_{\epsilon}$ representing the divisor class of $\phi_{\epsilon}^*(\tau_{T_1}^*(2\Theta))$ for some $T_1$ on $J$ such that $2T_1=T$. Recall that we assume all points in $J[2]$ are defined over $K$ and we have an explicit isomorphism $\phi_{\epsilon}: J_{\epsilon} \rightarrow J$ such that $(J_{\epsilon}, [2]\circ \phi_{\epsilon})$ is the 2-covering of $J$ corresponding to $\epsilon\in \text{Sel}^2(J)$. Recall $\delta: J(K) \rightarrow H^1(G_K, J[2])$ in \eqref{eqn: connecting map}. We first prove the following lemma.\\


\begin{lemma}\label{lem: longest vertical map over K}
Let $T \in J(K)$. Suppose $\phi_{\epsilon+\delta(T)}: J_{\epsilon+\delta(T)}\rightarrow J$ is an  isomorphism and $(J_{\epsilon+\delta(T)}, [2] \circ \phi_{\epsilon+\delta(T)})$ is the 2-covering of $J$ corresponding to $\epsilon+\delta(T) \in H^1(G_K, J[2])$. Let $T_1 \in J$ such that $2T_1=T$. Then, $\phi_{\epsilon+ \delta(T)}^{-1} \circ \tau_{T_1} \circ \phi_{\epsilon}: J_{\epsilon} \rightarrow J_{\epsilon+\delta(T)}$ is defined over $K$. 
\end{lemma}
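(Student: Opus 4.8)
The plan is to show that the morphism $g := \phi_{\epsilon+\delta(T)}^{-1} \circ \tau_{T_1} \circ \phi_{\epsilon} \colon J_{\epsilon} \to J_{\epsilon+\delta(T)}$, which is manifestly an isomorphism defined over $\bar{K}$ since it is a composite of $\bar{K}$-morphisms, is in fact Galois invariant, i.e.\ $g^{\sigma} = g$ for every $\sigma \in G_K$; this is equivalent to $g$ being defined over $K$. The only inputs I need are the cocycle description of Proposition~\ref{proposition_2_covering} together with the triviality of coboundaries under the standing assumption $J[2] \subset J(K)$.

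First I would fix cocycles. By Proposition~\ref{proposition_2_covering}, write $\phi_{\epsilon} \circ (\phi_{\epsilon}^{-1})^{\sigma} = \tau_{\epsilon_{\sigma}}$ and $\phi_{\epsilon+\delta(T)} \circ (\phi_{\epsilon+\delta(T)}^{-1})^{\sigma} = \tau_{\mu_{\sigma}}$, where $(\sigma \mapsto \epsilon_{\sigma})$ and $(\sigma \mapsto \mu_{\sigma})$ represent $\epsilon$ and $\epsilon + \delta(T)$ respectively. Because all of $J[2]$ is $K$-rational, $G_K$ acts trivially on $J[2]$, so every coboundary $\sigma \mapsto \sigma(W)-W$ with $W \in J[2]$ vanishes and $B^1(G_K, J[2]) = 0$; hence each class of $H^1(G_K, J[2])$ has a \emph{unique} cocycle representative. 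Consequently $\mu_{\sigma} = \epsilon_{\sigma} + \delta(T)_{\sigma}$ as an exact equality, where $\delta(T)_{\sigma} = \sigma(T_1) - T_1$ is the standard cocycle of the connecting map $\delta$. This is precisely the identity already extracted in the proof of Lemma~\ref{lem: existence of rational divisor D_T}.

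Next I would rewrite the Galois conjugates of the three factors as translations. From the relations above one obtains $\phi_{\epsilon}^{\sigma} = \tau_{-\epsilon_{\sigma}} \circ \phi_{\epsilon}$ and $(\phi_{\epsilon+\delta(T)}^{-1})^{\sigma} = \phi_{\epsilon+\delta(T)}^{-1} \circ \tau_{\mu_{\sigma}}$, while $(\tau_{T_1})^{\sigma} = \tau_{\sigma(T_1)}$. Substituting these into $g^{\sigma} = (\phi_{\epsilon+\delta(T)}^{-1})^{\sigma} \circ (\tau_{T_1})^{\sigma} \circ \phi_{\epsilon}^{\sigma}$ and collecting the translations gives
$$g^{\sigma} = \phi_{\epsilon+\delta(T)}^{-1} \circ \tau_{\mu_{\sigma} + \sigma(T_1) - \epsilon_{\sigma}} \circ \phi_{\epsilon}.$$
The whole claim then reduces to the single identity $\mu_{\sigma} + \sigma(T_1) - \epsilon_{\sigma} = T_1$ in $J$. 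Plugging in $\mu_{\sigma} = \epsilon_{\sigma} + \sigma(T_1) - T_1$ collapses the left-hand side to $2\sigma(T_1) - T_1$, and since $[2]$ is defined over $K$ and $T \in J(K)$ we have $2\sigma(T_1) = \sigma(2T_1) = \sigma(T) = T = 2T_1$, so $2\sigma(T_1) - T_1 = T_1$. Therefore $g^{\sigma} = g$.

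I expect the only delicate point to be the exact (not merely cohomologous) equality $\mu_{\sigma} = \epsilon_{\sigma} + \delta(T)_{\sigma}$, which is exactly where the rationality of $J[2]$ enters, through the vanishing of $B^1(G_K, J[2])$. Once this is secured, the remainder is a short bookkeeping of translations in which the two-torsion of $\sigma(T_1) - T_1$ makes the translation factor collapse to $T_1$, recovering $g$ itself.
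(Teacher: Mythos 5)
Your proof is correct and takes essentially the same route as the paper: both arguments rest on the computation from the proof of Lemma \ref{lem: existence of rational divisor D_T}(i) identifying the cocycle of $\tau_{T_1}\circ\phi_{\epsilon}$ with that of $\phi_{\epsilon+\delta(T)}$, made into an exact (not merely cohomologous) equality by the vanishing of coboundaries when $J[2]\subset J(K)$, followed by a rearrangement. The paper simply states the resulting identity $\tau_{T_1}\circ\phi_{\epsilon}\circ((\tau_{T_1}\circ\phi_{\epsilon})^{-1})^{\sigma}=\phi_{\epsilon+\delta(T)}\circ(\phi_{\epsilon+\delta(T)}^{-1})^{\sigma}$ and leaves implicit the translation bookkeeping that you carry out explicitly (your $2\sigma(T_1)-T_1=T_1$ step).
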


\begin{proof}
Using the same argument as in the  proof of Lemma \ref{lem: existence of rational divisor D_T}(i), we know that $(J_{\epsilon}, [2] \circ \tau_{T_1} \circ \phi_{\epsilon})$ is the 2-covering of $J$ corresponding to $\epsilon+\delta(T) \in H^1(G_K, J[2])$. Since all points in  $J[2]$ are defined over $K$,  we have $\tau_{T_1} \circ \phi_{\epsilon} \circ ((\tau_{T_1} \circ \phi_{\epsilon} )^{-1})^{\sigma} = \phi_{\epsilon+ \delta(T)} \circ (\phi_{\epsilon+ \delta(T)}^{-1})^{\sigma}$, as required. \\
\end{proof}

Let $T \in J(K)$ with $2T_1=T$. Consider the commutative diagram below which is formed by two copies of the standard diagram \eqref{diagram: BS diagram}.  Note that all the horizontal maps are defined over $K$ as is the composition of the vertical map on the left by Lemma \ref{lem: longest vertical map over K}. Hence, the composition of the thick arrows is defined over $K$. Then the pull back on $J_{\epsilon}$ via the thick arrows of a hyperplane section on $\mathcal{K}_{\epsilon+\delta(T)} \subset \pp^3$ is a $K$-rational divisor $D_T$ on $J_{\epsilon}$ representing the divisor class $\phi_{\epsilon}^*(\tau_{T_1}^*(2\Theta))$. We note that in the case where $T \in J[2]$, the composition of the vertical maps on the left hand side of the the diagram below is in fact given by a $16 \times 16$ matrix defined over $K$ even though the individual maps are not defined over $K$.\\

\begin{equation}\label{diagram 1}
\begin{tikzcd}[column sep = large]
J_{\epsilon} \subset \pp^{15} \arrow[r, "|\phi_{\epsilon}^*(2\Theta)|"] \arrow[d, "\phi_{\epsilon}", very thick] & \mathcal{K}_{\epsilon} \subset \pp^3 \arrow[d, "\psi_{\epsilon}"]\\
J\subset \pp^{15}  \arrow[r, "|2\Theta|"] \arrow[d, "\tau_{T_1}", very thick] & \mathcal{K} \subset \pp^3\\
J\subset \pp^{15}  \arrow[r, "|2\Theta|"] \arrow[d, "\phi_{\epsilon+\delta(T)}^{-1}", very  thick]& \mathcal{K} \subset \pp^3\\
J_{\epsilon + \delta(T)} \subset \pp^{15} \arrow[r, "|\phi^*_{\epsilon+\delta(T)}(2\Theta)|", very thick] & \mathcal{K}_{\epsilon+\delta(T)} \subset \pp^3 \arrow[u, "\psi_{\epsilon + \delta(T)}"'].\\
\end{tikzcd}
\end{equation}

The bottom horizontal morphism $J_{\epsilon+\delta(T)} \xrightarrow{|\phi^*_{\epsilon+ \delta(T)}(2\Theta)|} \mathcal{K}_{\epsilon+\delta(T)} \subset \pp^3$ can be explicitly computed using the algorithm in Section \ref{sec:explicit-computation-of-d} with the Selmer element $\epsilon$ replaced by  $\epsilon+\delta(T)$. Also, by Theorem \ref{theorem: explicit twist of J}, we have explicit formulae for $\phi_{\epsilon}$ and  $\phi_{\epsilon+ \delta(T)}.$ Hence, to explicitly compute $D_T$,  we need to find a way to deal with $\tau_{T_1}$, for some $T_1$ such that $2T_1=T$.\\

Since we need to apply the above argument to $T=P, Q, R, S$, the basis for $J[2]$, it would suffice to compute the translation maps $\tau_{T_1}: J\subset \pp^{15} \rightarrow J \subset \pp^{15}$ when $T_1 \in J[4]$. This map is given by a $16 \times 16$ matrix. We show how to compute a $10 \times 16$ matrix in the following proposition. We then explain below why this is sufficient for our purposes.\\

\begin{proposition}\label{prop: partial translation by 4 torsion}
Suppose $T_1 \in J[4]$.  Given the coordinates of $T_1  \in J \subset \pp^{15}_{\{k_{ij}, b_i\}}$, we can compute the following composition of morphisms:

$$\Psi: J \subset \pp^{15}_{\{k_{ij}, b_i\}} \xrightarrow{\tau_{T_1}} J \subset \pp^{15}_{\{k_{ij}, b_i\}} \xrightarrow{proj} \pp^{9}_{k_{ij}}.$$\\

\end{proposition}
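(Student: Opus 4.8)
The plan is to realise $\Psi$ as the even part of a single linear map and then to compute its entries from the coordinates of $T_1$. First I would observe that, since $T_1\in J[4]$, the principal polarization gives $\tau_{T_1}^*(4\Theta)-4\Theta=4[\tau_{T_1}^*\Theta-\Theta]=4\lambda(T_1)=\lambda(4T_1)=0$ in $\mathrm{Pic}^0(J)$, so $\tau_{T_1}^*(4\Theta)\sim 4\Theta$. Hence translation by $T_1$ carries the basis of $\mathcal{L}(4\Theta)$ to another such basis and induces a projective linear automorphism of $\pp^{15}$, given by a matrix $N$. Composing with the projection onto the ten even coordinates $k_{ij}$ exhibits $\Psi$ as the $10\times 16$ submatrix of $N$ formed by the rows indexed by $k_{11},\dots,k_{44}$, and it remains to compute these rows as functions of the given coordinates of $T_1$.

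Next I would separate each such row into its parts under the involution $[-1]$. Using $\tau_{-T_1}=[-1]\circ\tau_{T_1}\circ[-1]$ together with the facts that every $k_{pq}$ is even and every $b_r$ is odd, the row $k_{ij}\circ\tau_{T_1}=\sum_{p\le q}A^{pq}_{ij}k_{pq}+\sum_r B^{r}_{ij}b_r$ satisfies $k_{ij}\circ\tau_{-T_1}=\sum_{p\le q}A^{pq}_{ij}k_{pq}-\sum_r B^{r}_{ij}b_r$. Therefore the $k$-coefficients record the even combination $\tfrac12\bigl(k_{ij}(Z+T_1)+k_{ij}(Z-T_1)\bigr)$ and the $b$-coefficients record the odd combination $\tfrac12\bigl(k_{ij}(Z+T_1)-k_{ij}(Z-T_1)\bigr)$, for a generic point $Z$. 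Thus the desired matrix splits into a $10\times 10$ even block $(A^{pq}_{ij})$ and a $10\times 6$ odd block $(B^r_{ij})$, which I would treat separately.

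For the even block I would appeal to the biquadratic forms of the Kummer surface in Cassels--Flynn \cite{the book}. The symmetric combination $k_{ij}(Z+T_1)+k_{ij}(Z-T_1)$ depends only on the Kummer points $\pm Z$ and $\pm T_1$ and is an explicit biquadratic expression in the Kummer coordinates of $Z$ and of $T_1$; substituting the known Kummer coordinates of $T_1$ turns it into a linear form in the $k_{pq}(Z)$, whose coefficients are exactly the $A^{pq}_{ij}$. This step is essentially classical and presents no difficulty.

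The main obstacle is the odd block, which amounts to recovering the actual translate $Z+T_1$ rather than only the unordered pair $\{\pm(Z+T_1),\pm(Z-T_1)\}$ that the Kummer-level formulas see. The antisymmetric combination $k_{ij}(Z+T_1)-k_{ij}(Z-T_1)$ is odd in $Z$ and also odd in $T_1$, so it is bilinear in the odd coordinates $b_r(Z)$ and $b_s(T_1)$ with coefficients built from the even coordinates; this is precisely why one must be given all sixteen coordinates of $T_1$, the odd ones $b_s(T_1)$ being what distinguishes $T_1$ from $-T_1$, and hence $\tau_{T_1}$ from $\tau_{-T_1}$. I would obtain these bilinear forms by specialising the explicit genus-two addition law underlying the defining quadrics of Theorem~\ref{theorem: 72} \cite{the gp law paper}, together with the explicit odd coordinates of \cite{explicit twist} and \cite{the book}, reading off the $B^r_{ij}$ after substituting the coordinates of $T_1$. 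As a consistency check one has $\Psi(\mathcal{O}_J)=(k_{ij}(T_1))_{ij}$, and the relation $\tau_{T_1}^2=\tau_{2T_1}$ with $2T_1\in J[2]$ constrains $N$ against the known linear action $M_{2T_1}$ on $\mathcal{K}$, which can be used to verify the formulas or, once the even block is fixed, to solve for the remaining coefficients by linear algebra.
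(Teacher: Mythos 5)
Your overall frame (that $\tau_{T_1}$ acts projectively linearly on $\pp^{15}$, so that $\Psi$ is a $10\times 16$ matrix to be computed) is correct, but there is a genuine gap at the crux, and it sits exactly where the paper does its one piece of real work. All of the classical data you invoke is inherently attached to the \emph{pair} of translates $Z+T_1$, $Z-T_1$: the Cassels--Flynn biquadratic forms $B_{ij}$ of \cite{the book} compute, projectively from $k(Z)$ and $k(T_1)$, the symmetrized mixed products $k_i(Z+T_1)k_j(Z-T_1)+k_j(Z+T_1)k_i(Z-T_1)$, \emph{not} your combination $k_{ij}(Z+T_1)+k_{ij}(Z-T_1)$. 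Indeed your combination is not even determined by independent projective lifts of the two translates: rescaling $k(Z+T_1)$ by $\lambda$ and $k(Z-T_1)$ by $\mu$ changes it to $\lambda^2 k_{ij}(Z+T_1)+\mu^2 k_{ij}(Z-T_1)$, which is not proportional to the original unless $\lambda^2=\mu^2$; so it cannot literally ``be'' a biquadratic form in the Kummer data without first correlating the lifts. The missing bridge --- and this is precisely the paper's proof --- is the two-torsion point $T=2T_1\in J[2]$: translation by $T$ \emph{is} linear on $\mathcal{K}\subset\pp^3$, with explicit matrix $M_T$ from \cite{the book}, so projectively $k_j(Z+T_1)=\sum_{l}(M_T)_{jl}\,k_l(Z-T_1)$, whence
$$k_{ij}(Z+T_1)=k_i(Z+T_1)k_j(Z+T_1)=\sum_{l=1}^{4}(M_T)_{jl}\,k_l(Z-T_1)\,k_i(Z+T_1),$$
and the \emph{unsymmetrized} mixed-product matrix $k_l(Z-T_1)k_i(Z+T_1)$ is projectively equal to explicit bilinear forms $\phi_{li}$ in the sixteen $\pp^{15}$-coordinates of $Z$ and of $T_1$, by \cite[Theorem 3.2]{the gp law paper}. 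Substituting the given coordinates of $T_1$ (the odd ones being what distinguishes $T_1$ from $-T_1$) yields all ten rows of $\Psi$ in one step, with no even/odd splitting at all.

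Measured against this, both halves of your plan are unsupported as written. The even block is not given by the Cassels--Flynn forms alone but by their composite with $M_T$ (one gets $\sum_l (M_T)_{jl}B_{il}$, projectively, after correlating lifts), and the odd block is deferred to ``specialising the explicit genus-two addition law'' --- but that addition law is exactly \cite[Theorem 3.2]{the gp law paper}, and using it requires the same $M_T$-conversion you never supply, since its output is attached to the pair $(Z+T_1,\,Z-T_1)$ rather than to the single translate $Z+T_1$. Your fallback of recovering the remaining coefficients from $\tau_{T_1}^2=\tau_{2T_1}$ ``by linear algebra'' fails twice over: the constraint is quadratic in the unknown entries, and it cannot determine the odd block even in principle, because $\tau_{T_1}$ and $\tau_{-T_1}$ have the same square and the same even block, so the system is blind to the sign of the odd block; that sign can only enter through the odd coordinates of $T_1$, i.e.\ through the bilinear forms above.
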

\begin{proof}
Let $T =2T_1 \in J[2]$. Recall that we let $M_T$ denote the action of translation by $T$ on $\mathcal{K} \subset \pp^3$. Then for any $P \in J$,  we have  $k_i(P+T)=\sum_{j=1}^4(M_T)_{ij}k_j(P)$ projectively as a vector of length 4, and  projectively as a vector of length 10, $k_{ij}(P+T_1)$ is equal to
$$k_i(P+T_1)k_j(P+T_1)=k_i(P+T_1) \sum_{l=1}^4(M_T)_{jl}k_l(P-T_1)=\sum_{l=1}^4(M_T)_{jl}k_l(P-T_1)k_i(P+T_1).$$
By \cite[Theorem 3.2]{the gp law paper}, there exists a $4 \times 4$ matrix of bilinear forms $\phi_{ij}(P, T_1)$, with explicit formula, that is projectively equal to the matrix $k_i(P-T_1)k_j(P+T_1)$. Since we have an explicit formula for $M_T$ in \cite[Chapter 3, Section 2]{the book}, we can partially compute the linear isomorphism $\tau_{T_1}$: $$\Psi: J \subset \pp^{15}_{\{k_{ij}, b_i\}} \xrightarrow{\tau_{T_1}} J \subset \pp^{15}_{\{k_{ij}, b_i\}} \xrightarrow{proj} \pp^{9}_{k_{ij}},$$
as required. \\
\end{proof}

\begin{remark}\label{rem: computing T_1}
Suppose $2T_1=T \in J[2]$. From the doubling formula on $\mathcal{K}$ as in \cite[Appendix~C]{the gp law paper}, we can compute the coordinates of the image of $T_1$ on $\mathcal{K} \subset \pp^3$ from the coordinates of the image of  $T$ on $\mathcal{K} \subset \pp^3$. This gives the 10 even coordinates,  $k_{ij}(T_1)$ and we can solve for the odd coordinates by the 72 defining equations of $J$ given in Theorem \ref{theorem: 72}. Note that by Lemma \ref{lem: longest vertical map over K}, we know the field of definition of $T_1$ is contained in the composition of the field of definition of $\phi_{\epsilon}$ and $\phi_{\epsilon+\delta(T)}$. Hence,  we can compute this field explicitly which helps solving for this point using MAGMA \cite{magma}.\\

\end{remark}

Consider  $T \in J[2]$ with $T_1 \in J[4]$ such that $2T_1=T$. We follow the discussion in Section \ref{sec:explicit-computation-of-d} with $\epsilon$ replaced by $\epsilon+\delta(T)$. This gives a diagram analogous to \eqref{eqn: D}. Let $k_{1, T}', ..., k_{4, T}'$ be the coordinates on the ambient space of $\mathcal{K}_{\epsilon+\delta(T)}\subset \pp^3$ and let $u_{0, T}, ..., u_{9, T}, v_{1, T}, ..., v_{6, T}$ be the coordinates on the ambient space of $J_{\epsilon+\delta(T)}\subset \pp^{15}$. Let $k_{ij, T}'=k_{i, T}'k_{j, T}'$. Decomposing the lower half of the diagram \eqref{diagram 1} gives the commutative diagram below:

\begin{equation}\label{equation: D_T}
\begin{tikzcd}[ampersand replacement=\&,column sep = large]
J_{\epsilon} \subset \pp^{15}_{\{u_i, v_i\}} \arrow[rr, "|\phi_{\epsilon}^*(2\Theta)|"] \arrow[d, "\phi_{\epsilon}", very thick]\&\& \mathcal{K}_{\epsilon} \subset \pp^3_{k_i'} \arrow[d, "\psi_{\epsilon}"]\\
J\subset \pp^{15}_{\{k_{ij}, b_i\}}  \arrow[rr, "|2\Theta|"] \arrow[d, "\tau_{T_1}"] \arrow[dr, "\Psi", very thick]\&\& \mathcal{K} \subset \pp_{k_i}^3\\
J\subset \pp^{15}_{\{k_{ij}, b_i\}}  \arrow[r, "proj"]\&\pp_{k_{ij}}^9 \arrow[r, "g_1"]\arrow[d, "(\tilde{\psi}_{\epsilon+\delta(T)})^{-1}", very thick]\& \mathcal{K} \subset \pp_{k_i}^3\\
J_{\epsilon + \delta(T)} \subset \pp^{15}_{\{u_{i, T}, v_{i, T}\}} \arrow[r] \arrow[u, "{\phi}_{\epsilon+\delta(T)}"]\&\pp_{k_{ij, T}'}^9 \arrow[r, "g_2", very thick]\& \mathcal{K}_{\epsilon+\delta(T)} \subset \pp_{k_{i, T}'}^3 \arrow[u, "\psi_{\epsilon + \delta(T)}"].\\
\end{tikzcd}
\end{equation}

Recall Proposition \ref{prop: partial translation by 4 torsion} explains how $\Psi$ can be explicitly computed and the composition of the thick arrows in \eqref{equation: D_T} is defined over $K$ by Lemma \ref{lem: longest vertical map over K}.  Let $D_T$ be the pull back on $J_{\epsilon}$ via the thick arrows in  \eqref{equation: D_T} of the hyperplane section given  by $k_{1, T}'=0$.  This implies that $D_T$ is a $K$-rational divisor on $J_{\epsilon}$ representing the class of $\phi_{\epsilon}^*(\tau_{T_1}^*(2\Theta))$. Moreover, the pull back on $J_{\epsilon}$ via $$J_{\epsilon} \subset \pp^{15}_{\{u_i, v_i\}} \xrightarrow{\phi_{\epsilon}} J \subset \pp^{15}_{\{k_{ij, b_i}\}}\xrightarrow{\Psi} \pp^9_{k{ij}} \xrightarrow{(\tilde{\psi}_{\epsilon+\delta(T)})^{-1}} \pp^9_{k_{ij, T}'}$$ of the hyperplane section given  by $k_{11, T}'=0$ is $2D_T$. \\

We now apply the above discussion with $T=P, Q, R, S$ and get that the  divisors $D_P, D_Q, D_R, D_S$ on $J_{\epsilon}$ described in Remark \ref{rem: DP, DQ, DR, DS} as required.\\

\begin{remark}\label{rem: explicit fp, fq, fr, fs}
From the above discussion and the discussion in Section \ref{sec:explicit-computation-of-d}, the $K$-rational functions $f_P, f_Q, f_R,  f_S$ in the formula for the Cassels-Tate pairing in Theorem \ref{thm: 1} are quotients of linear forms in the coordinates of the ambient space of $J_{\epsilon} \subset \pp^{15}$. They all  have the same denominator, this being the linear form that cuts out the divisor $2D$.\\

\end{remark}

\section{The Obstruction Map}
In this section, we will state and prove an explicit formula for the obstruction map $\text{Ob}: H^1(G_K, J[2]) \rightarrow \text{Br}(K)$. See below for the definition of this map. This  generalizes a formula in the elliptic curve case due to O'Neil \cite [Proposition 3.4]{oneil}, and later refined by Clark \cite[Theorem 6]{clark}. 
Although this is not needed for  the computation of the Cassels-Tate pairing, it explains why we needed to work with quaternion algebras in Section \ref{sec:computing-the-twist-of--the-kummer-surface}.\\


\begin{definition}
	The obstruction map $$\text{Ob}: H^1(G_{K}, J[2]) \rightarrow H^2(G_{K}, \bar{K}^*) \cong \text{Br}(K)$$ is the composition of the map $H^1(G_{K}, J[2]) \rightarrow H^1(G_{K}, \text{PGL}_4(\bar{K}))$ induced by the action of translation of $J[2]$ on $\mathcal{K} \subset \pp^3$, and the injective map $H^1(G_{K}, \text{PGL}_4(\bar{K})) \rightarrow H^2(G_{K}, \bar{K}^*)$ induced from the short exact sequence $0 \rightarrow \bar{K}^* \rightarrow \text{GL}_4(\bar{K}) \rightarrow \text{PGL}_4(\bar{K}) \rightarrow 0$.\\
	
\end{definition}
\begin{theorem}\label{thm: ob map}
	Let $J$ be the Jacobian variety of a genus two curve defined over a field $K$ with $char(K) \neq 2$. Suppose all points in $J[2]$ are defined over $K$. For $\epsilon \in H^1(G_K, J[2])$, represented by $(a, b, c, d) \in (K^*/(K^*)^2)^4$ as in Section \ref{sec:formula-for-the-cassels-tate-pairing},  the obstruction map $\mathrm{Ob}: H^1(G_K,J[2]) \rightarrow \mathrm{Br}(K)$ sends $\epsilon$ to the class of the tensor product of two quaternion algebras:
	$$\mathrm{Ob} (\epsilon) = (c_P  a, c_Q  b) + (c_Rc, c_Sd),$$
	where $c_P, c_Q, c_R, c_S \in K$ are such that $M_P^2 = c_PI, M_Q^2 = c_QI,M_R^2 = c_RI, \; \text{and} \; M_S^2 = c_SI$ as defined in Section \ref{sec:computing-the-twist-of--the-kummer-surface}.\\
	
\end{theorem}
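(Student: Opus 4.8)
The plan is to unwind the definition of the obstruction map and compute the relevant $\mathrm{PGL}_4$-cocycle directly, then recognize the resulting Brauer class as a tensor product of quaternion algebras. Recall that $\mathrm{Ob}(\epsilon)$ is obtained by pushing $\epsilon \in H^1(G_K, J[2])$ through the map induced by $J[2] \to \mathrm{PGL}_4(\bar K)$, $T \mapsto [M_T]$, and then applying the connecting map $H^1(G_K, \mathrm{PGL}_4(\bar K)) \to H^2(G_K, \bar K^*)$ from the sequence $0 \to \bar K^* \to \mathrm{GL}_4(\bar K) \to \mathrm{PGL}_4(\bar K) \to 0$. First I would fix the cocycle representing $\epsilon$. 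Using the isomorphism $H^1(G_K, J[2]) \cong (K^*/(K^*)^2)^4$ induced by the Weil pairing with $P,Q,R,S$, the class $(a,b,c,d)$ corresponds (as already computed in the proof of Theorem~\ref{thm: 1}) to the cocycle
$$\sigma \mapsto \tilde b_\sigma P + \tilde a_\sigma Q + \tilde d_\sigma R + \tilde c_\sigma S,$$
where $(-1)^{\tilde x_\sigma} = \sigma(\sqrt x)/\sqrt x$. Applying $T \mapsto [M_T]$ gives the $\mathrm{PGL}_4$-valued cocycle $\sigma \mapsto [M_P^{\tilde b_\sigma} M_Q^{\tilde a_\sigma} M_R^{\tilde d_\sigma} M_S^{\tilde c_\sigma}]$.

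Next I would compute the image under the connecting map explicitly. Choose the lift $g_\sigma := M_P^{\tilde b_\sigma} M_Q^{\tilde a_\sigma} M_R^{\tilde d_\sigma} M_S^{\tilde c_\sigma} \in \mathrm{GL}_4(\bar K)$ of the $\mathrm{PGL}_4$-cocycle. The connecting map sends the class to the $2$-cocycle
$$(\sigma, \tau) \mapsto g_\sigma \cdot {}^\sigma g_\tau \cdot g_{\sigma\tau}^{-1} \in \bar K^*,$$
which measures the failure of $\sigma \mapsto g_\sigma$ to be a genuine $\mathrm{GL}_4$-cocycle. The key inputs are the multiplicative relations among the $M_T$: that $M_P^2 = c_P I$, etc., and that $[M_P, M_Q] = [M_R, M_S] = -I$ while all other pairs commute. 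I would plug in, use that the $M_T$ have entries in $K$ (so $\sigma$ acts only on the exponents $\tilde x_\tau$, which are integers, i.e.\ on nothing), and collect the scalar that results from straightening $g_\sigma \,{}^\sigma g_\tau$ into the normal-form monomial $g_{\sigma\tau}$. The squares $M_T^2 = c_T I$ contribute factors of $c_P, c_Q, c_R, c_S$ exactly when both $\sigma$ and $\tau$ flip the corresponding square root, and the commutators $[M_P,M_Q]=[M_R,M_S]=-I$ contribute the sign factors coming from reordering the $P,Q$ block and the $R,S$ block past each other.

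The cleanest way to package the bookkeeping is to match the resulting scalar $2$-cocycle against the standard cocycle description of a quaternion algebra recalled in Lemma~\ref{lem: br element}: the class of $(\alpha,\beta)$ is represented by the $2$-cocycle that equals $\alpha$ (resp.\ takes the value $-1$) precisely when both $\sigma$ and $\tau$ flip $\sqrt\beta$ (resp.\ flip the appropriate pair of roots). Because the $P,Q$ generators and the $R,S$ generators commute with each other and square to $c_P,c_Q$ and $c_R,c_S$ respectively, the $2$-cocycle factors as a product of two independent pieces, one supported on $(a,b)$ through $M_P,M_Q$ and one on $(c,d)$ through $M_R,M_S$. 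Reading off these pieces via Lemma~\ref{lem: br element} identifies them with $(c_P a, c_Q b)$ and $(c_R c, c_S d)$, and since the tensor product of quaternion algebras corresponds to the sum in $\mathrm{Br}(K)$, this yields $\mathrm{Ob}(\epsilon) = (c_P a, c_Q b) + (c_R c, c_S d)$. The main obstacle I anticipate is the careful sign and scalar accounting in straightening the monomial $g_\sigma \, {}^\sigma g_\tau$: one must verify that the cross terms arising from commuting $M_R, M_S$ past $M_P, M_Q$ cancel (they should, since those commutators are trivial), so that the two quaternion factors genuinely decouple, and that the $-I$ from $[M_P,M_Q]$ contributes exactly the Hilbert-symbol sign $(a,b)$-piece and not an extraneous factor. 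A consistency check I would run is to confirm that the formula is well defined modulo squares in each slot, matching the fact that $\mathrm{Ob}$ depends only on the class in $(K^*/(K^*)^2)^4$, and that it recovers the known elliptic-curve formula of O'Neil and Clark in the analogous $2\times 2$ situation.
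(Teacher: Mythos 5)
Your reduction of $\mathrm{Ob}(\epsilon)$ to an explicit scalar $2$-cocycle is sound, and your choice of lift is in fact slightly cleaner than the paper's: lifting by $g_\sigma = M_P^{\tilde{b}_\sigma}M_Q^{\tilde{a}_\sigma}M_R^{\tilde{d}_\sigma}M_S^{\tilde{c}_\sigma}$, whose entries lie in $K$, makes the Galois action on the lift trivial, whereas the paper lifts by the normalized matrices $N_T=M_T/\sqrt{c_T}$ and must track the extra signs coming from $N_P^\sigma=(-1)^{(\widetilde{c_P})_\sigma}N_P$. Straightening $g_\sigma\,{}^\sigma g_\tau$ into $g_{\sigma\tau}$ using $M_T^2=c_TI$ and $[M_P,M_Q]=[M_R,M_S]=-I$ (all other pairs commuting) gives the cocycle
\begin{equation*}
(\sigma,\tau)\longmapsto (-1)^{\tilde{a}_\sigma\tilde{b}_\tau+\tilde{c}_\sigma\tilde{d}_\tau}\, c_P^{\iota_{\sigma,\tau,b}}\, c_Q^{\iota_{\sigma,\tau,a}}\, c_R^{\iota_{\sigma,\tau,d}}\, c_S^{\iota_{\sigma,\tau,c}}
\end{equation*}
(in the notation of the proof of Theorem \ref{thm: 1}), whose class, by the standard cocycle descriptions of quaternion algebras (Serre, \emph{Local Fields}, Ch.~XIV, \S 2 --- note the lemma you cite sits in a commented-out block of the paper, so cite the source directly), is $(a,b)+(c,d)+(c_P,b)+(c_Q,a)+(c_R,d)+(c_S,c)\in\mathrm{Br}(K)$. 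Up to this point you agree with the paper's proof.

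The gap is in your final step, where you claim the cocycle decouples into a ``$P,Q$-piece'' representing $(c_Pa,c_Qb)$ and an ``$R,S$-piece'' representing $(c_Rc,c_Sd)$. By bimultiplicativity of the quaternion symbol, $(c_Pa,c_Qb)=(c_P,c_Q)+(c_P,b)+(a,c_Q)+(a,b)$, whereas your $P,Q$-piece represents $(a,b)+(c_P,b)+(c_Q,a)$; these differ by the class $(c_P,c_Q)$, which need not be split, and nothing in your argument shows it is. (That $M_P,M_Q$ generate a copy of $(c_P,c_Q)$ inside $\mathrm{Mat}_4(K)$ proves nothing here: every quaternion algebra $Q$ embeds in $\mathrm{Mat}_4(K)$, since $Q\otimes Q^{\mathrm{op}}\cong\mathrm{End}_K(Q)$.) Likewise the $R,S$-piece is off by $(c_R,c_S)$. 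The missing ingredient --- and it is exactly the ``on the other hand'' step of the paper's proof --- is that $\langle M_P,M_Q\rangle\cong(c_P,c_Q)$ and $\langle M_R,M_S\rangle\cong(c_R,c_S)$ are mutually centralizing subalgebras of $\mathrm{Mat}_4(K)$, so that $(c_P,c_Q)\otimes(c_R,c_S)\cong\mathrm{Mat}_4(K)$, i.e.\ $(c_P,c_Q)+(c_R,c_S)=0$ in $\mathrm{Br}(K)$. Only after adding this identity does your computed class
$(a,b)+(c,d)+(c_P,b)+(c_Q,a)+(c_R,d)+(c_S,c)$ equal $(c_Pa,c_Qb)+(c_Rc,c_Sd)$; the two pieces individually are each wrong by the (generally nonzero) common class $(c_P,c_Q)=(c_R,c_S)$, and only their sum is correct.
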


\begin{proof}
	Let $N_P = 1/\sqrt{c_P}M_P, N_Q= 1/\sqrt{c_Q}M_Q, N_R=1/\sqrt{c_R}M_R, N_S =1/\sqrt{c_S}M_S \in \text{GL}_4(\bar{K})$. Then $N_P$ is a normalized representation in $\text{GL}_4(\bar{K})$ of $[M_P] \in \text{PGL}_4(K)$. Similar statements are true for $Q, R, S$. Notice that $N_P^2= N_Q^2 = N_R^2 = N_S^2= I$. So there is a uniform way of picking a representation in $\text{GL}_4(\bar{K})$ for the translation induced by $\alpha_1P + \alpha_2 Q + \alpha_3R + \alpha_4S$ for $\alpha_i \in \zz$, namely  $N_P^{\alpha_1} N_Q^{\alpha_2} N_R^{\alpha_3} N_S^{\alpha_4}.$\\
	
	Since  $\epsilon \in H^1(K, J[2]) $ is represented by $(a, b, c, d) \in (K^*/{K^*}^2)^4$ and $P, Q, R, S$ satisfy the Weil pairing matrix \eqref{equation: WP matrix}, a cocycle representation of $\epsilon$ is: $$\sigma \mapsto \tilde{b}_{\sigma}P +\tilde{a}_{\sigma}Q +\tilde{d}_{\sigma}R +\tilde{c}_{\sigma}S ,$$ where for each element $x \in K^*/(K^*)^2$, we define $\tilde{x}_{\sigma} \in \{0, 1\}$ such that $(-1)^{\tilde{x}_{\sigma}}=\sigma(\sqrt{x})/\sqrt{x}$.\\
	
	Now consider the following commutative diagram of cochains:
	
	$$
	\begin{tikzcd}
	C^1(G_K, \bar{K}^*) \arrow[r] \arrow[d, "d"]&  C^1(G_K, \text{GL}_4) \arrow[r] \arrow[d, "d"]& C^1(G_K, \text{PGL}_4) \arrow[d, "d"]\\
	C^2(G_K, \bar{K}^*) \arrow[r]&  C^2(G_K, \text{GL}_4) \arrow[r]& C^2(G_K, \text{PGL}_4).\\
	\end{tikzcd}
	$$
	
	Defining $N_{\sigma}=N_P^{\tilde{b}_{\sigma}}N_Q^{\tilde{a}_{\sigma}}N_R^{\tilde{d}_{\sigma}}N_S^{\tilde{c}_{\sigma}}$, we have
	$$
	\begin{array}{ccc}
	H^1(K, J[2])&\rightarrow &H^1(G_K, \text{PGL}_4)\\
	(a, b, c, d)& \mapsto & (\sigma \mapsto [N_{\sigma}]).
	\end{array}
	$$
	Then $(\sigma \mapsto [N_{ \sigma}]) \in C^1(G_K, \text{PGL}_4)$ lifts to $(\sigma \mapsto N_{ \sigma}) \in C^1(G_K, \text{GL}_4)$ which is then mapped to   $$((\sigma, \tau) \mapsto (N_{\tau})^{\sigma}N_{\sigma \tau}^{-1}N_{ \sigma}) \in C^2(G_K, \text{GL}_4).$$

	Note that $$N_P^{\sigma}= (\frac{1}{\sqrt{c_P}}M_P)^{\sigma}=\frac{1}{\sigma(\sqrt{c_P})}M_P =\frac{\sqrt{c_P}}{\sigma(\sqrt{c_P})}N_P= (-1)^{(\widetilde{c_P})_{\sigma}}N_P,$$ treating $c_P$ in $K^*/(K^*)^2$. Similar results also hold for $Q, R, S$. Observe that for any $x \in K^*/(K^*)^2$ and $ \sigma, \tau \in G_K$, we have $\tilde{x}_{\sigma}-\tilde{x}_{\sigma\tau}+ \tilde{x}_{\sigma}$ is equal to 0 or 2. Since $N_P^2= N_Q^2 = N_R^2 = N_S^2= I$, $[N_P, N_Q]=[N_R, N_S]=-I$ and the commutators of the other pairs are trivial, we have  \\
	\begin{align*}
	(N_{\tau})^{\sigma}N_{\sigma \tau}^{-1}N_{ \sigma}
	=& (N_P^{\tilde{b}_{\tau}}N_Q^{\tilde{a}_{\tau}}N_R^{\tilde{d}_{\tau}}N_S^{\tilde{c}_{\tau}})^{\sigma} \cdot N_S^{-\tilde{c}_{\sigma\tau}}N_R^{-\tilde{d}_{\sigma\tau}}N_Q^{-\tilde{a}_{\sigma\tau}}N_P^{-\tilde{b}_{\sigma\tau}} \cdot N_P^{\tilde{b}_{\sigma}}N_Q^{\tilde{a}_{\sigma}}N_R^{\tilde{d}_{\sigma}}N_S^{\tilde{c}_{\sigma}}\\
	=& (-1)^{(\widetilde{c_P})_{\sigma} \cdot \tilde{b}_{\tau}} \cdot (-1)^{(\widetilde{c_Q})_{\sigma} \cdot \tilde{a}_{\tau}} \cdot(-1)^{(\widetilde{c_R})_{\sigma} \cdot \tilde{d}_{\tau}} \cdot(-1)^{(\widetilde{c_S})_{\sigma} \cdot \tilde{c}_{\tau}}\\ 
	& \cdot N_P^{\tilde{b}_{\tau}}N_Q^{\tilde{a}_{\tau}} \cdot N_R^{\tilde{d}_{\tau}}N_S^{\tilde{c}_{\tau}}N_S^{-\tilde{c}_{\sigma\tau}}N_R^{-\tilde{d}_{\sigma\tau}} \cdot N_Q^{-\tilde{a}_{\sigma\tau}}N_P^{-\tilde{b}_{\sigma\tau}}N_P^{\tilde{b}_{\sigma}}N_Q^{\tilde{a}_{\sigma}} \cdot N_R^{\tilde{d}_{\sigma}}N_S^{\tilde{c}_{\sigma}}\\
	=& (-1)^{(\widetilde{c_P})_{\sigma} \cdot \tilde{b}_{\tau}} \cdot (-1)^{(\widetilde{c_Q})_{\sigma} \cdot \tilde{a}_{\tau}} \cdot(-1)^{(\widetilde{c_R})_{\sigma} \cdot \tilde{d}_{\tau}} \cdot(-1)^{(\widetilde{c_S})_{\sigma} \cdot \tilde{c}_{\tau}}\\ 
	& \cdot N_P^{\tilde{b}_{\tau}}N_Q^{\tilde{a}_{\tau}}N_Q^{-\tilde{a}_{\sigma\tau}}N_P^{-\tilde{b}_{\sigma\tau}}N_P^{\tilde{b}_{\sigma}}N_Q^{\tilde{a}_{\sigma}} \cdot N_R^{\tilde{d}_{\tau}}N_S^{\tilde{c}_{\tau}}N_S^{-\tilde{c}_{\sigma\tau}}N_R^{-\tilde{d}_{\sigma\tau}}N_R^{\tilde{d}_{\sigma}}N_S^{\tilde{c}_{\sigma}}\\
	=& (-1)^{(\widetilde{c_P})_{\sigma} \cdot \tilde{b}_{\tau}} \cdot (-1)^{(\widetilde{c_Q})_{\sigma} \cdot \tilde{a}_{\tau}} \cdot(-1)^{(\widetilde{c_R})_{\sigma} \cdot \tilde{d}_{\tau}} \cdot(-1)^{(\widetilde{c_S})_{\sigma} \cdot \tilde{c}_{\tau}} \cdot  (-1)^{ \tilde{a}_{\sigma} \cdot \tilde{b}_{\tau}} \cdot  (-1)^{\tilde{c}_{\sigma} \cdot \tilde{d}_{\tau} } \cdot I.\\
	\end{align*}

	On the other hand, $(c_P, c_Q) \otimes (c_R, c_S)$ is isomorphic to  $\langle M_P, M_Q, M_R, M_S\rangle = \text{Mat}_4(K)$ which represents the identity element in the Brauer group. Hence,  we have $$(c_P  a, c_Q  b) + (c_R  c, c_S d)= (a, b) + (c, d)+ (c_P, b)+(c_Q, a) +(c_R, d)+(c_S, c),$$
	which is precisely represented by a cocycle that sends  $(\sigma, \tau)$ to \\
	$$(-1)^{(\widetilde{c_P})_{\sigma} \cdot \tilde{b}_{\tau}} \cdot (-1)^{(\widetilde{c_Q})_{\sigma} \cdot \tilde{a}_{\tau}} \cdot(-1)^{(\widetilde{c_R})_{\sigma} \cdot \tilde{d}_{\tau}} \cdot(-1)^{(\widetilde{c_S})_{\sigma} \cdot \tilde{c}_{\tau}} \cdot  (-1)^{ \tilde{a}_{\sigma} \cdot \tilde{b}_{\tau}} \cdot  (-1)^{\tilde{c}_{\sigma} \cdot \tilde{d}_{\tau} },$$
	for all $\sigma, \tau \in G_K$ as required.

\end{proof}

\section{Bounding the Set of Primes}\label{sec:prime-bound}

In this section, we directly show that the formula for $\langle \epsilon, \eta \rangle_{CT}$ in Theorem \ref{thm: 1} is actually always a finite product,  as mentioned in Remark \ref{rem: CTP finite product}. Since for a local field with odd residue characteristic, the Hilbert symbol between $x$ and $y$ is trivial when the valuations of $x$, $y$ are both 0, it suffices to find a finite set $S$ of places of $K$, such that outside $S$ the first arguments of the Hilbert symbols in the formula for $\langle \epsilon, \eta \rangle_{CT}$  have valuation 0 for some choice of the local point $P_v$.\\

Let $\mathcal{O}_K$ be the ring of integers for the number field $K$. By rescaling the variables, we assume the genus two curve is defined by $y^2=f(x)=f_6x^6 + ...+f_0$ where the $f_i$ are in  $\mathcal{O}_K$.\\

The first arguments of the Hilbert symbols in the  formula for $\langle \epsilon, \eta \rangle_{CT}$ are $f_P(P_v)$,  $f_Q(P_v)$, $f_R(P_v)$ or $f_S(P_v),$  where $f_P, f_Q, f_R, f_S$ can be computed as the quotients of two linear forms in $\pp^{15} $ with the denominators being the same, as explained in  Remark \ref{rem: explicit fp, fq, fr, fs}. Since we know that the Cassels-Tate pairing is independent of the choice of the local points $P_v$  as long as these are chosen to avoid all the zeros and poles, it suffices to make sure that there exists at least one local point $P_v$ on $J_{\epsilon}$ for which the values of the quotients of the linear forms  all have valuation 0 for all $v$ outside  $S$. The idea is to first  reduce the problem to the residue field. \\

 By Theorem \ref{theorem: explicit twist of J} and Remark \ref{rem: explicit twist of J formula},  we have an explicit formula for the linear isomorphism 
$$J_{\epsilon}\subset \pp^{15} \xrightarrow{\phi_{\epsilon}} J \subset \pp^{15},$$
which is defined over $K'=K(\sqrt{a}, \sqrt{b}, \sqrt{c}, \sqrt{d})$ where  $\epsilon = (a, b, c, d) \in (K^*/(K^*)^2)^4$. Suppose $\phi_{\epsilon}$ is represented by $M_{\epsilon} \in \text{GL}_{16}(K')$. Note we can assume that all entries of $M_{\epsilon}$ are in $\mathcal{O}_{K'}$, the ring of integers of $K'$.\\

\begin{notation}\label{notation: reduction}
	Let $K$ be a local field with valuation ring $\mathcal{O}_K$, uniformizer $\pi$ and residue field $k$. Let $X \subset \pp^N$ be a variety defined over $K$ and  $I(X) \subset K[x_0,...,x_N]$ be the ideal of $X$.  Then the reduction of $X$, denoted by $\bar{X}$,  is the variety defined by the polynomials $\{\bar{f} : f \in I(X) \cap \mathcal{O}_K[x_0,...,x_N]\}$. Here $\bar{f}$ is the polynomial obtained by reducing all the coefficients of $f$ modulo $\pi$. Note that this definition of the reduction of a variety $X \subset \pp^N$ defined over a local field $K$ is equivalent to taking the special fibre of  the closure of $X$ in $\pp^{N}_S$, where
	$S = \operatorname{Spec} \mathcal {O}_K$.\\
\end{notation}

Let   $S_0=\{\text{places of bad reduction for } \mathcal{C}\} \cup \{\text{places dividing 2}\} \cup \{\text{infinite places}\}$. Fix a place  $v \notin S_0$ and suppose it is above the prime $p$. We now treat $J, J_{\epsilon}$ and $\mathcal{C}$ as varieties defined over the local field $K_v$. Let $\mathcal{O}_v$ denote the valuation ring of $K_v$ and $\mathbb{F}_{q}$ denote its residue field,  where  $q$ is some power of $p$. It can be shown that $\bar{J}$ is also an abelian variety as the defining equations of  $J$ are defined over $\mathcal{O}_v$ and are derived algebraically in terms of the coefficients of the defining equation of the genus two curve $\mathcal{C}$ by Theorem \ref{theorem: 72}. In fact,  $\bar{J}$ is the Jacobian variety of  $\bar{\mathcal{C}}$, the reduction of $\mathcal{C}$. \\

Now fix a place $v'$ of $K'$ above the place $v$ of $K$. Let $\mathcal{O}_{v'}$ and $\mathbb{F}_{q^r}$ denote the valuation ring and the residue field of $K'_{v'}$. It can be checked that  as long as $v'$ does not divide $\det M_{\epsilon} \in \mathcal{O}_{K'}$, the following diagram commutes and $\bar{M_{\epsilon}}$ is a well defined linear isomorphism defined over the residue field $\mathbb{F}_{q^r}$ between two varieties defined over $\mathbb{F}_q$: \\


\[
\begin{tikzcd}
J_{\epsilon} \subset \pp^{15}\arrow[r, "M_{\epsilon}"] \arrow[d, "\text{reduction}"]& J \subset \pp^{15} \arrow[d, "\text{reduction}"]\\
\bar{J_{\epsilon}} \subset \pp^{15} \arrow[r, "\bar{M_{\epsilon}}"]& \bar{J}\subset \pp^{15},\\
\end{tikzcd}
\]
where $\bar{M_{\epsilon}}$ denotes the reduction of the matrix $M_{\epsilon}$ over the residue field $\mathbb{F}_{q^r}$.\\

This linear isomorphism $\bar{M_{\epsilon}}$ implies that $\bar{J_{\epsilon}}$ is smooth whenever $\bar{J}$ is. In this case,  $\bar{J_{\epsilon}}$ is a twist of $\bar{J}$ and it in fact a 2-covering of $\bar{J}$. Indeed, the surjectivity of the natural map $\text{Gal}(K'_{v'}/K_v) \rightarrow \text{Gal}(\mathbb{F}_{q^r}/\mathbb{F}_q)$ shows that  $M_{\epsilon}(M_{\epsilon}^{-1})^{\sigma}=\tau_{P_{\sigma}}$ for all $\sigma \in \text{Gal}(K'_{v'}/K_v)$ implies that  $\bar{M_{\epsilon}}(\bar{M_{\epsilon}}^{-1})^{\bar{\sigma}}=\tau_{\bar{P_{\bar{\sigma}}}}$ for all $\bar{\sigma} \in \text{Gal}(\mathbb{F}_{q^r}/\mathbb{F}_q)$.
We know any principal  homogeneous space of $\bar{J}$ over a finite field has a point by \cite[Theorem 2]{lang} and so is trivial by Proposition \ref{prop:2 covering has a point}. Therefore,  there exists an isomorphism  $\bar{J_{\epsilon}} \xrightarrow{\psi} \bar{J}$ defined over $\mathbb{F}_q$.  Hence, as long as $v \notin S_0$ and $v$ does not divide  $N_{K'/K} (\det M_{\epsilon})$, $\bar{J_{\epsilon}}$ has the same number of $\mathbb{F}_q$-points as $\bar{J}$. By the Hasse-Weil bound, we know the number of $\mathbb{F}_q$-points on $\mathcal{C}$ is bounded below by $q-1-4\sqrt{q}$. Since we can represent points on $\bar{J}$ by  pairs of points on $\bar{\mathcal{C}}$ and this representation is unique other than the identity point on $\bar{J}$. The number of $\mathbb{F}_q$-points on $\bar{J}$ is bounded below by $(q-1-4\sqrt{q})(q-3-4\sqrt{q})/2$.\\

On the other hand, let $l_1, ..., l_5$ be the 5 linear forms that appear as numerator or denominator of $f_P, f_Q, f_R, f_S$. We can assume that the coefficients of $l_i$ are in $\mathcal{O}_K$ by scaling, for all $i = 1, ..., 5$. Fix a place $v$ of $K$ that does not divide all the coefficients of $l_i$, for any $i=1, ..., 5$. Let $H_i$ be the hyperplane defined by the linear form $l_i$ and  $\bar{H_i}$ be its reduction, which is a hyperplane defined over the residue field $\mathbb{F}_q$,  We need to bound the number of $\mathbb{F}_q$-points of $\bar{J_{\epsilon}}$ that lie on one of the hyperplanes $\bar{H_i}$. Let $r_i$ be the number of irreducible components of $\bar{J_{\epsilon}} \cap \bar{H_i}$. By \cite[Chapter~1, Theorem 7.2 (Projective Dimension Theorem) and Theorem 7.7]{hartshorne}, we know that each irreducible component $C^i_j$ of $\bar{J_{\epsilon}} \cap \bar{H_i}$, where $j =1, ..., r_i$, is a curve and the sum of degrees of all the irreducible components counting intersection multiplicity is $\deg \bar{J_{\epsilon}}=32.$ Leting $d^i_j =\deg C^i_j$,  we have  $\sum_{j=1}^{r_i}d^i_j \le 32$ for all $i$.

\begin{lemma}
	Let $C \subset \pp^N$ be a curve of degree $d$. Then 
	$\# C(\mathbb{F}_q)\le d(q+1)$.
	\end{lemma}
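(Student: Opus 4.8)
The plan is to reduce to the case of a nondegenerate irreducible curve and then count $\mathbb{F}_q$-points by slicing with the $q+1$ rational hyperplanes through a fixed rational codimension-two linear subspace, rather than by projecting to $\pp^1$.

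First I would reduce to $C$ irreducible: writing $C=\bigcup_j C_j$ for the reduced irreducible components, degree is additive (so $\sum_j \deg C_j \le d$) while $\#C(\mathbb{F}_q)\le \sum_j \#C_j(\mathbb{F}_q)$, so it suffices to prove the bound for each component. For an irreducible $C$ I would then replace $\pp^N$ by the linear span $\langle C\rangle\cong \pp^M$, which is defined over $\mathbb{F}_q$ and leaves the degree unchanged; now $C$ is nondegenerate in $\pp^M$. The cases $M\le 1$ are immediate, since $C$ is then a single point or a line $\cong \pp^1$, contributing at most $q+1\le d(q+1)$ rational points.

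For $M\ge 2$, fix any $\mathbb{F}_q$-rational linear subspace $\Lambda\cong \pp^{M-2}$, for instance $\{x_0=x_1=0\}$; no genericity is required. The hyperplanes of $\pp^M$ containing $\Lambda$ form a pencil whose rational members $H_1,\dots,H_{q+1}$ are exactly $q+1$ in number, and every rational point off $\Lambda$ lies on precisely one of them. Because $C$ is nondegenerate we have $C\not\subseteq H_i$, so each scheme-theoretic section $C\cap H_i$ is zero-dimensional of degree $\deg C=d$, giving $\#(C\cap H_i)(\overline{\mathbb{F}_q})\le d$. Writing $a=\#(C\cap\Lambda)(\mathbb{F}_q)$ and $b_i=\#(C\cap H_i)(\mathbb{F}_q)$, the $a$ points on $\Lambda$ are counted in every $H_i$, so a direct count yields $\#C(\mathbb{F}_q)=a+\sum_{i=1}^{q+1}(b_i-a)=\sum_{i=1}^{q+1} b_i - q a\le (q+1)d$.

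The one point requiring care — and what I expect to be the main obstacle — is that over a small field $\mathbb{F}_q$ one cannot in general find a rational $\Lambda$ \emph{disjoint} from $C$, so the naive approach of projecting to $\pp^1$ from a rational subspace off the curve can fail (every rational point might lie on $C$). The device that circumvents this is precisely to \emph{allow} $\Lambda$ to meet $C$: the overcount of the $a$ points of $C\cap\Lambda$, which lie on all $q+1$ hyperplanes of the pencil, is exactly compensated by the term $-qa$, and since $a\ge 0$ this can only help the bound. The nondegeneracy reduction is what guarantees that no $H_i$ contains all of $C$, so that each slice $C\cap H_i$ stays finite and the Bézout bound $b_i\le d$ applies.
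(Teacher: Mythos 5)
Your proof is correct, and at bottom it rests on the same geometric idea as the paper's: the paper projects away from the codimension-two subspace $\Lambda=\{x_0=x_1=0\}$ to get a nonconstant map $C\to\pp^1$ of degree $\le d$ and counts fibres over the $q+1$ rational points of $\pp^1$, and your rational hyperplanes $H_1,\dots,H_{q+1}$ through $\Lambda$ are precisely the closures of those fibres. But your execution is genuinely different, and it is more watertight than the paper's one-line argument. The paper asserts that the projection is a \emph{morphism} $C\to\pp^1$; this is automatic only when $C$ is smooth or misses $\Lambda$, neither of which can always be arranged over $\mathbb{F}_q$ (as you observe), so strictly speaking the paper's route needs a detour through the normalization --- where one must further worry that a rational singular point of $C$ need not lift to a rational point of the normalization --- or else exactly the kind of count you perform. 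Your argument never defines a map at all: nondegeneracy (after your reductions to irreducible components and to the linear span, which the paper compresses into ``we may assume $C$ is contained in no hyperplane'') guarantees $C\not\subseteq H_i$, B\'ezout bounds each $b_i$ by $d$, and the identity $\#C(\mathbb{F}_q)=\sum_i b_i-qa$ absorbs the points of $C\cap\Lambda$ with a favourable sign, since $a\ge 0$. What your route buys is a proof valid verbatim for singular, possibly reducible curves that meet the centre of projection; what the paper's buys is brevity. Note that in the paper's application the lemma is invoked for the irreducible components of hyperplane sections of $\bar{J_{\epsilon}}$, which may well be singular and meet $\Lambda$, so your more careful version is the one that is actually needed there.
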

\begin{proof}
	We may assume that $C$ is contained in no hyperplane. Then projection to the first two coordinates gives a nonconstant morphism $C \rightarrow \pp^1$ of degree $\le d$. Since $\#\pp^1(\mathbb{F_q})=q+1$, this gives the required bound.\\
	\end{proof}

By applying the above lemma to each $C_j^i$, we get the number of $\mathbb{F}_q$-points of  $\bar{J_{\epsilon}}$ that lie on one of the hyperplanes $\bar{H_i}, i = 1, ..., 5,$ is no more than $$\sum_{i=1}^{5} \sum_{j=1}^{r_i} d^i_j  \cdot (q+1) \le 160(q+1).$$



We compute that for any $x > 500$, we have  $(x-1-4\sqrt{x})(x-3-4\sqrt{x})/2>160(x+1)$. Recall $q$ is a power of $p$. Hence, if $v$ is a place of $K$ above the prime $p>500$ such that $v \notin S_0$ and $v$ does not divide $N_{K'/K}(\det M_{\epsilon})$ or all the coefficients of $l_i$ for some $i$, we have a smooth $\mathbb{F}_q$-point  on $\bar{J_{\epsilon}}$ which by Hensel's Lemma \cite[Exercise C.9(c)]{hensel} lifts to the point $P_v$ as required. This implies that the first arguments of the Hilbert symbols in the formula for the local Cassels-Tate pairing of $\langle \epsilon, \eta \rangle_{CT}$  have valuation 0. It can be checked that since $v \notin S_0$, the second arguments of these Hilbert symbols also  have valuation 0. Hence, the formula for the Cassels-Tate is indeed always a finite product.\\

Note that in the case where $K=\Q$ or more generally if $K$ has class number 1, we can always make the linear forms primitive by scaling. Therefore, in this case, the subset $\{$places dividing all the coefficients of the denominator or the numerator of $f_P, f_Q, f_R \text{ or } f_S\}$ is empty.\\

\section{Worked Example}
Now we demonstrate the algorithm with a worked example computed using MAGMA \cite{magma}. In particular, we will see with this example, that computing the Cassels-Tate pairing on $\text{Sel}^2(J)$ does improve the rank bound obtained via a 2-descent. This genus two curve was kindly provided by my PhD supervisor, Tom Fisher, along with a list of other genus two curves for me to test the algorithm.\\

Consider the following genus two curve
$$\mathcal{C}: y^2=-10x(x+10)(x+5)(x-10)(x-5)(x-1).$$
Its Jacobian variety $J$ has all its two torsion points defined over $\Q$. A set of generators of $J[2]$ compatible with the Weil pairing matrix \eqref{equation: WP matrix} are $P= \{(0, 0), (-10, 0)\}, Q= \{(0, 0), (-5, 0)\},
R= \{(10, 0), (5, 0)\}, S= \{(10, 0), (1, 0)\}.$ We identify $H^1(G_K, J[2])=(\Q^*/(\Q^*)^2)^4$ as in Section \ref{sec:formula-for-the-cassels-tate-pairing}. Consider $\epsilon, \eta \in \text{Sel}^2(J)$ represented by $(-33, 1, -1, -11)$ and $( 11, 1, -1, -11)$ respectively. The images of $[P], [Q], [R], [S]$ via $\delta: J(\Q)/2J(\Q) \rightarrow H^1(G_{\Q}, J[2])$, computed via the explicit formula as in \cite[Chapter 6, Section 1]{the book},  are $\delta([P])=(-66, 1, 6, 22), \delta([Q])=(-1, 1, 3, 1),
	\delta([R])=( 6, 3, 1, 3 ), \delta([S])=(22, 1, -3, -11 ).$ Now following the discussions in Sections \ref{sec:explicit-computation-of-d} and \ref{sec:explicit-computation-of-dp-dq-dr-ds}, we can compute, using the coordinates $c_0, ..., c_9, d_1, ..., d_6$ for $J_{\epsilon} \in \pp^{15}$ as described  in Remark \ref{rem: explicit twist of J formula}. we have 
	\begin{align*}
	k_{11}'&= 618874080c_0 - 496218440c_1 - 390547052c_3
	+ 205551080c_4\\
	&+ 384569291c_6 + 52868640c_8;\\
	\\
	k_{11, P}'&= -36051078800000c_2 + 8111492730000c_3 + 265237150000c_7\\
	& - 196928587500c_8 - 6786529337500c_9 + 22531924250d_2 \\
	&- 126449158891d_4 - 117221870375d_5 + 937774963000d_6; \\
	\\
	k_{11, Q}'&= 134800c_1 + 235600c_3 + 62000c_4 + 52235c_6 + 60016d_1 - 5456d_5;\\
	\\
	k_{11, R}'&= -30223125c_6 + 4050000c_8 - 49750d_3 + 709236d_4 \\
	\\
	k_{11, S}' &= 4724524800c_1 + 8557722360c_3 + 13102732800c_4 + 1258642935c_6 \\
	&+ 7291944000c_9 -
	2709362304d_1 + 97246845d_2 + 8475710d_3\\
	& + 30788208d_5.
	\end{align*}
	
	Hence, we have explicit formulae for 
	
	$$f_P = \frac{k_{11, P}'}{k_{11}'}, f_Q = \frac{k_{11, Q}'}{k_{11}'}, f_R = \frac{k_{11, R}'}{k_{11}'},  f_S = \frac{k_{11, S}'}{k_{11}'}.$$
	
	In particular, they are defined over $\Q$ as claimed. From Section \ref{sec:prime-bound}, we compute that only primes below 500 can potentially contribute to $\langle \epsilon, \eta \rangle_{CT}$. Then, it turns out that the only nontrivial local Cassels-Tate pairings between $\epsilon$ and $\eta$ are at places $11, 19, \infty$ and  $\langle \epsilon, \eta \rangle_{CT}=-1$.\\

Under the isomorphism $H^1(G_{\Q}, J[2]) \rightarrow (\Q^*/(\Q^*)^2)^4$, $\text{Sel}^2(J)$ has size $2^6$ and is generated by $(-33, 1, -1, -11), (11, 1, -1, -11 ), (66, 1, 2, 22),(11, 1, 2, 22), (3, 3, 3, 3), (3, 1, 3, 1).$ Since $\mathcal{C}$ has rational points, the Cassels-Tate pairing can be shown to be  alternating using \cite[Corollary 7]{poonen stoll}. Since all the two torsion points on $J$ are rational and $\langle \epsilon, \eta \rangle_{CT}=-1$, we get $|\ker \langle \;, \; \rangle_{CT}|=2^4$.\\

Indeed, we verified that the Cassels-Tate pairing matrix, with the generators of $\text{Sel}^2(J)$ listed above, is
$$\begin{bmatrix}
1 &-1& 1&  1&  -1& -1\\
-1& 1& 1&  -1& 1& -1\\
1 &1 &1 & 1 & 1 & 1\\
1  &-1 &1& 1& -1& -1\\
-1 & 1 & 1 & -1 & 1 &-1\\
-1 &-1 & 1 &-1 &-1 & 1\\
\end{bmatrix},$$
which is a rank 2 matrix.\\

As shown in \cite[Remark 1.9.4(ii)]{thesis}, in the case where all points in $J[2]$ are defined over the base field, computing the Cassels-Tate pairing on $\text{Sel}^2(J)$ gives the same rank bound as obtained from carrying out a $4$-descent, i.e. computing $\text{Sel}^4(J)$, which can potentially give a better rank bound  than the one given by a 2-descent. In this example, the rank bound coming from 2-decent was $\rank(J(\Q))\le 2$. Our calculations of the Cassels-Tate pairing on $\text{Sel}^2(J)$ improves this bound and in fact shows that $\rank(J(\Q))=0$.\\


\begin{bibdiv}
	\begin{biblist}
		
		\bib{monique}{article}{
			author ={van Beek, M.},
			title={Computing the Cassels-Tate Pairing},
			note={Doctoral Dissertation. University of Cambridge, 2015.}
		}
		
		\bib{3 isogeny}{article}{
			author ={van Beek, M.},
			author ={Fisher, T. A.},
			title={Computing the Cassels-Tate pairing on 3-isogeny Selmer groups via cubic norm equations},
			journal={Acta Arithmetica},
			volume={185},
			number={4},
			date = {2018},
			pages={367-396},
			doi={10.4064/AA171108-11-4},
		}
		
			\bib{magma}{article}{
			author ={Bosma, W.},
			author={Cannon, J. },
			author={Playoust, C. }
			title={The Magma algebra system. I. The user language},
			journal={J. Symbolic Comput.}
			volume={24}
			number={3-4}
			date={1997}
		   pages={235–265}
		   doi={10.1006/jsco.1996.0125}
		}

		\bib{cassels1}{article}{
			author ={Cassels, J. W. S.},
			title={Arithmetic on Curves of Genus 1. I. On a conjecture of Selmer.},
			journal={Journal für die reine und angewandte Mathematik},
			volume={202},
			date = {1959},
			pages={52-99},
		}
		
		\bib{cassels2}{article}{
			author ={Cassels, J. W. S.},
			title={J. W. S. Cassels, Arithmetic on curves of genus 1, IV. Proof of the Hauptvermutung.},
			journal={Journal für die reine und angewandte Mathematik},
			volume={211},
			date = {1962},
			pages={95-112},
			doi={10.1515/crll.1962.211.95 },
		}

		\bib{cassels98}{article}{
			author ={Cassels, J. W. S.},
			title={Second Descents for Elliptic Curves},
			journal={Journal für die reine und angewandte Mathematik},
			volume={494}
			date = {1998},
			pages={101-127},
			doi={10.1515/crll.1998.001 },
		}

				\bib{the book}{book}{
			author ={Cassels, J. W. S. },
			author ={Flynn, E. V.},
			title={Prolegomena to a MiddleBrow Arithmetic of Curves of Genus 2},
			series={London Mathematical Society Lecture Note Series},
			volume={230},
			publisher={Cambridge University Press},
			date= {1996},
			isbn={9780511526084},
			doi={10.1017/CBO9780511526084},
		}
	
		\bib{clark}{article}{
			author ={Clark, P. L.},
			title={The Period–Index Problem in WC-Groups I: Elliptic Curves},
			journal={Journal of Number Theory},
			volume={114},
			date = {2005},
			number={1},
			pages={193-208},
			doi={10.1016/j.jnt.2004.10.001},
		}
		
		\bib{bs hasse}{article}{
			author ={Coray, D.},
			author ={Manoil, C.},
			title={On large Picard groups and the Hasse Principle for curves and K3 surfaces},
			journal={Acta Arithmetica},
			volume={76},
			date = {1996},
			number={2},
			pages={165-189},
			doi={10.4064/aa-76-2-165-189},
		}

\bib{steve}{article}{
author ={Donnelly, S.},
title={Algorithms for the Cassels-Tate pairing},
date={2015},
note={preprint}
}

\bib{platonic}{article}{
author ={Fisher, T. A.},
title={The Cassels–Tate pairing and the Platonic solids},
journal={Journal of Number Theory},
volume={98},
number={1}
date = {2003},
pages={105-155},
doi={10.1016/S0022-314X(02)00038-0},
}

\bib{binary quartic}{article}{
author ={Fisher, T. A.},
title={On binary quartics and the Cassels-Tate pairing },
date={2016},
note={preprint},
}

\bib{3 selmer}{article}{
author ={Fisher, T. A.},
author ={Newton, R.},
title={Computing the Cassels-Tate pairing on the 3-Selmer group of an elliptic curve},
journal={Journal of Number Theory},
volume={10},
number={7},
date = {2014},
pages={1881­1907},
doi={10.1142/S1793042114500602},
notes ={Available at http://centaur.reading.ac.uk/58175/}
}

		\bib{72 theorem}{article}{
			author ={Flynn, E. V.},
			title={The Jacobian and Formal Group of a Curve of Genus 2 over an Arbitrary Ground Field},
			journal={Mathematical Proceedings of the Cambridge Philosophical Society},
			volume={107},
			date = {1990},
			number={3},
			pages={425-441},
			doi={10.1017/S0305004100068729},
		}

		\bib{the gp law paper}{article}{
			author ={Flynn, E. V.},
			title={The Group Law on the Jacobian of a Curve of Genus 2},
			journal={Journal für die reine und angewandte Mathematik},
			volume={439},
			date = {1993},
			pages={45-70},
			isbn={0075-4102},
		}

		\bib{explicit twist}{article}{
			author ={Flynn, E. V.},
			author = {Testa, D.},
			author = {van Luijk, R.},
			title={Two-Coverings of Jacobians of Curves of Genus 2},
			journal={Proceedings of the London Mathematical Society},
			volume={104},
			date = {2012},
			number={2},
			pages={ 387–429},
			doi={10.1112/plms/pdr012},
		}

		\bib{QA}{book}{
			author ={Gille, P.},
			author ={Szamuely, T.},
			title={Central Simple Algebras and Galois Cohomology},
			series={Cambridge Studies in Advanced Mathematics},
			volume={101},
			publisher={Cambridge University Press},
			date= {2006},
			isbn={9780521861038},
		}

		\bib{hartshorne}{book}{
			author ={Hartshorne, R.},
			title={Algebraic Geometry},
			series={Graduate Texts in Mathematics}
			volume={52},
			publisher={Springer, New York, NY},
			date= {1977},
			isbn={978-1-4419-2807-8},
			doi={10.1007/978-1-4757-3849-0},
		}
		
		\bib{hensel}{book}{
			author ={Hindry, M.},
			author={Silverman, J. H.}
			title={Diophantine Geometry: An Introduction},
			series={Graduate Texts in Mathematics 201}
			volume={52},
			publisher={Springer},
			date= {2000}
		}
		
		\bib{lang}{article}{
			author ={Lang, S.},
			title={Algebraic Groups Over Finite Fields},
			volume={78},
			number={3},
			year = {1956},
			pages = {555-563},
			journal = {American Journal of Mathematics},
			doi={10.2307/2372673},
		}

		\bib{abelian varieties}{book}{
			author ={Milne, J.S.},
			title={Abelian Varieties, Second Edition},
			date = {2008},
			pages ={166+vi},
			note={Available at www.jmilne.org/math/},
		}

		\bib{theta}{book}{
			author ={Mumford,D.},
			title={Abelian Varieties},
			publisher={Oxford University Press},
			series={Tata Institute of Fundamental Research Studies in Mathematics},
			date = {1970},
			number ={5}
			note={Published for the Tata Institute of Fundamental Research, Bombay}
		}
		
		\bib{oneil}{article}{
			author ={O'Neil, C.},
			title={The Period-Index Obstruction for Elliptic Curves},
			journal={Journal of Number Theory},
			volume={95},
			date = {2002},
			number={2},
			pages={329-339},
			doi={10.1006/jnth.2001.2770},
		}

		\bib{poonen stoll}{article}{
			author ={Poonen, B.},
			author ={Stoll, M.},
			title={The Cassels-Tate pairing on polarized abelian varieties},
			journal={Annals of Mathematics},
			volume={150},
			date = {1999},
			number={3},
			pages={1109-1149},
			review={\MR{1740984}},
			doi={10.2307/121064<},
		}

		\bib{local tate duality}{article}{
			author ={Skorobogatov, A.},
			
			title={Abelian varieties over local and global fields},
			note={TCC course, Spring 2016}
		}

		\bib{tate}{article}{
			author ={Tate, J.},
			title={Duality theorems in Galois cohomology over number fields},
			journal={Proc. In- ternat. Congr. Mathematicians (Stockholm)},
			date = {1962},
			pages={288–295},
			note ={Inst. Mittag-Leffler, Djursholm (1963)}
		}
		
			\bib{thesis}{article}{
			author ={Yan, J.},
			title={Computing the Cassels-Tate Pairing for Jacobian Varieties of Genus Two Curves},
			note={Doctoral Dissertation. University of Cambridge, 2021.}
		}

	\end{biblist}
\end{bibdiv}
\end{document}